\newtheorem{theorem}{Theorem}[section] \newtheorem{lemma}[theorem]{Lemma}
\newtheorem{proposition}[theorem]{Proposition}
\newtheorem{definition}{Definition}[section]
\newtheorem{example}[theorem]{Example} \numberwithin{equation}{section}
\newcommand{\C}{\mathbb C}  \newcommand{\R}{\mathbb R}
\newcommand{\Z}{\mathbb Z} \newcommand{\N}{\mathbb N} \newcommand{\F}{\mathbb F}
\renewcommand{\H}{\mathcal H} \newcommand{\T}{\mathbb T}
\newcommand{\spn}{\operatorname*{span}} 
\renewcommand{\Re}{\operatorname*{Re}} \renewcommand{\Im}{\operatorname*{Im}}
\newcommand{\conj}{\overline}
\begin{document}

\title  {Conjugate Phase Retrieval on ${\mathbb C}^M$ by real vectors}

\author{Luke Evans}

\address{Department of Mathematics, University of Maryland, College Park, 4176
Campus Drive, College Park, MD 20742.}

 \email{evansal@math.umd.edu}

\author{Chun-Kit Lai}

\address{Department of Mathematics, San Francisco State University, 1600
Holloway Avenue, San Francisco, CA 94132.}

 \email{cklai@sfsu.edu}

\subjclass[2010]{Primary 42C15, Secondary 15A63} 
\keywords{Phase retrieval, Frame, Conjugate, Generic numbers}

\begin{abstract} In this paper, we will introduce the notion of {\it conjugate
phase retrieval}, which is a relaxed definition of phase retrieval allowing
recovery of signals up to conjugacy as well as a global phase factor. It is
known that frames of real vectors are never phase retrievable on $\C^M$ in the
ordinary sense, but we show that they can be conjugate phase retrievable in
complex vector spaces. We continue to develop the theory on conjugate phase
retrievable real frames. In particular, a complete characterization of
conjugate phase retrievable  real frames on $\C^2$ and $\C^3$ is given.
Furthermore, we show that a generic real frame with at least $4M - 6$
measurements is conjugate phase retrievable in $\C^M$ for $ M \ge 4.$
\end{abstract}

\maketitle

\section{Introduction}

The {\it phase retrieval problem} concerns reconstruction of a signal from
linear measurements with noisy or corrupt phase information. The classical
formulation comes from applications such as X-ray crystallography where a signal
must be recovered from the magnitudes of its Fourier coefficients~\cite{Fienup}.
Phase retrieval also occurs in numerous other applications such as diffraction
imaging~\cite{Bunk,Burvall}, optics~\cite{Fienup,Fienup2}, speech
processing~\cite{BCE}, deep learning~\cite{deep-2016,Mallat2015}, and quantum
information theory~\cite{Wolf,Kech}.

\medskip

 In 2006, Balan, Casazza and Edidin introduced the following mathematical
 formulation for the phase retrieval problem within a complex Hilbert space $
 \mathcal{H} $ \cite{BCE}:

\begin{definition} Let ${\mathcal H}$ be a Hilbert space over the field
${\mathbb C}$.  We say that a set of vectors $\{ \varphi_n\}_{n \in I} \subseteq
\mathcal{H}$ with index set $I \subseteq \N,$ is {\bf  complex phase
retrievable}  if \begin{equation}\label{eq:init}|\left< x, \varphi_n \right> | =
|\left< y, \varphi_n \right>| \text{ for all } n \in I \ \Longrightarrow x
=e^{i\theta} y, \ \mbox{for some constant} \ \theta.  \end{equation} If
$\mathcal{H}$ is over the real numbers, then we say that  $\{ \varphi_n\}_{n \in
I} $ is {\it real phase retrievable} if $x=e^{i\theta} y$ is replaced by $x =
\pm y$ in (\ref{eq:init}).  \end{definition}

\medskip

One of the main questions in phase retrieval on complex vector spaces is to
determine the minimal number $N$ for which a generic frame (See Definition
\ref{def:generic}) in ${\mathbb C}^M$ with $N$ vectors can achieve complex phase
retrieval. This means also that with probability one, a randomly chosen frame
with at least $N$ vectors can perform conjugate phase retrieval. Balan, Casazza
and Edidin introduced the complement property as a geometric characterization of
real phase retrievability and showed that for $\H = \R^M$ any generic frame with
at least $2M-1$ vectors is real phase retrievable~\cite{BCE}. In comparison,
complex phase retrievability is a much more difficult problem. There is no known
geometric characterization of complex phase retrievability, and for $H = \C^M$
we know that $4M-4$ vectors are sufficient, with the necessary number of
vectors of the order $4M - o(1)$~\cite{sphase,CEHV}. There has also been
intensive research about the stability of phase retrieval and other different
type of generalizations. Interested readers may refer to \cite{B15} for a more
detailed discussion and summary of the recent results in phase retrieval.

\medskip

\subsection{Conjugate Phase retrieval.} Despite the wide applicability of
complex phase retrieval, satisfactory descriptions for complex phase retrievable
frames are still lacking. In particular,   a set of vectors $\{\varphi_n: n\in
I\}$ taken from ${\mathbb R}^M$ can never be complex phase retrievable on
$\C^M$, regardless of how many vectors we take.  Real frames fail because real
measurement vectors completely ignore conjugation: if $\varphi_n\in{\mathbb
R}^M$, then $$ |\left<x,\varphi_n\right>|=|\left<\overline{x},\varphi_n\right>|,
$$ for all $x\in \C^M$. However, $x \neq e^{i \theta} \overline{x}$ in general
(for example, take $x = (1 \ i \ i \ \cdots \ i)^T \in \C^M.$)
This introduces also an additional difficulty to geometrically visualize complex
phase retrievable vectors, which all lie inside ${\mathbb C}^M\setminus{\mathbb
R}^M$.

\medskip

Phase retrieval problem is also defined on the Paley-Wiener space $PW$
consisting of all of entire functions band-limited to
$[\frac{1}{2},\frac{1}{2}].$  We say that a sequence $\{\lambda_n \}_{n \in \Z}
\subseteq \R$ is a \textbf{set of real unsigned sampling} if for any two
real-valued $f,g \in PW$, $|f(\lambda_n)| = |g(\lambda_n)|$ for all $n \in \Z$
implies that $f = \pm g.$ It is proved that \cite{thakur} (see also
{\cite{Insp,AlGr}}) if $\lambda_n$ are taken to be twice of the Nyquist rate
(e.g. $\frac12{\mathbb Z}$), then it forms a set of real unsigned sampling.
However, the natural extension of our definition of unsigned sampling sets to
complex valued $PW$ cannot be resolved so easily.  Given any band-limited
complex-valued function $f$, the function $g(x) = \conj{f(x)}$ is also a
function in $PW$. Clearly $|f(\lambda)| = |g(\lambda)|$ for any $\lambda\in\R$,
but it is not true in general that $f(x) = e^{i \theta}\conj{f(x)}$ for all
$x\in \R$ and global constant $\theta.$ Thus, there cannot exist a sequence of
reals $\{\lambda_n \}_{k \in \N} $ that is a set of complex unsigned sampling as
defined.

\medskip

Both cases we discussed share the same problem that real samples and
measurements cannot distinguish conjugate vectors or functions. Yet, with the
phase information available, real frames on $\C^M$ can span the complex vector
spaces and real samples ${\mathbb Z}$ can perfectly reconstruct bandlimited
functions by the well-known Shannon Sampling Theorem.  This means that if we
want to close the gap between classical and phaseless reconstruction using real
measurements, we need to accept conjugacy as one of our ambiguities. We thus
propose the following definition:

\begin{definition} We say that a set of vectors $\{\varphi_n\}_{n \in I}
\subseteq {\mathbb C}^M$ with index set $I \subseteq \N,$ is {\bf conjugate
phase retrievable}  if $$ |\left< x, \varphi_n \right> | = |\left< y, \varphi_n
\right>| \text{ for all } n \in I \ \Longrightarrow \  \mbox{there exists} \
\theta \ \mbox{such that} \ x =e^{i\theta} y \text{ or } x =e^{i\theta}
\overline{y} .  $$(Here $\overline{y}$ means taking the conjugate over each
coordinates) \end{definition}

It is clear that frames that are complex phase retrievable must be conjugate
phase retrievable. Recently,  the concept of \textit{norm retrieval} with the
implication requiring only $\|f\| = \|g\|$ was proposed in \cite{BCCJW} as
another relaxed version of phase retrieval. The following implication is
obvious: $$ {\rm Complex \ phase \ retrieval} \Longrightarrow {\rm Conjugate \
phase \ retrieval} \Longrightarrow {\rm Norm \ retrieval}.  $$ From this
implication, we believe that conjugate phase retrieval would not lose more
generality in the reconstruction as norm retrieval does. We now discuss our main
result of conjugate phase retrieval.

\medskip

\subsection{Contribution.} We will be focusing mainly on finite dimensional
vector space $\C^M$. The main conclusion of this paper is that frames of real
vectors can be conjugate phase retrievable, for example, the frame with vectors
in the column of $\Phi =  \begin{bmatrix} 1 & 0 & 1 \\ 0 & 1 & 1 \end{bmatrix}$
is conjugate phase retrievable when considered over $\C^2$ (see
Theorem~\ref{thm:th1}). We will explore in detail the conjugate retrievability
of real frames lying in $\C^M.$ On $\C^2$ and $\C^3$, we fully solve the number
of real measurement vectors needed for conjugate phase retrievability, and
characterize all conjugate phase retrievable frame as real algebraic varieties.
Building from the recent results by Wang and Xu \cite{WX}, we prove that $4M -
6$ is a sufficient number of generic measurements for conjugate phase retrieval
in $\C^M$ for $M \ge 4.$

\medskip

The main idea of the proofs will be considering the phase-lift maps (similar to
\cite{sphase,B15}) by identifying a vector $x$ as $xx^{\ast}$ in the space of
all Hermitian matrices. We will show that $x$ and $y$ are equivalent up to a
phase and conjugacy if and only if the real part of $xx^{\ast}$ and $yy^{\ast}$
are equal (see Theorem \ref{thm:equiv}), on which our analysis will be based.

\medskip

We will also explore the conjugate phase retrievable frames that cannot perform
complex phase retrieval. Such frames are called {\bf strictly conjugate phase
retrievable}. In particular, real frames belong to this class. On ${\mathbb
C}^2$, we will show that the only strictly conjugate phase retrievable frames
are essentially real frames.

\medskip

We will organize our article as follows: In section~\ref{sec:setup},  we will
present our main setup and state our main results rigorously.  In section 3, we
will review the complement property and study the phase-lift map for conjugate
phase retrieval. In section~\ref{sec:C2C3}, we fully characterize conjugate
phase retrieval by real frames on $\C^2$ and $\C^3.$ We prove the generic number
of $4M - 6$ for $\C^M,$ $M \ge 4$ in section~\ref{sec:generic}. For section
\ref{sec:strict conjugate section}, we will study the strictly conjugate phase
retrievable frames. We will end our article with some open questions for
conjugate phase retrieval in both finite dimensional and infinite-dimensional
Hilbert spaces in Section~\ref{sec:end}.

\medskip

\section{Setup and Main Results}\label{sec:setup}

Throughout the rest of the paper, we will use the following equivalence relation
on ${\mathbb C}^M$:

\smallskip

For $x,y\in{\mathbb C}^M$, $$ x\sim y \ \mbox{if and only if} \ x = e^{i\theta
}y  \ \mbox{for some} \  \theta\in[0,2\pi) $$ $$ x\overset{\mathrm{conj}}{\sim}
y \ \mbox{if and only if} \ x \sim y  \ \mbox{or} \ x\sim \overline{y}.  $$
Recall also that if $y = (y_1 \ \cdots \ y_M)^T$, then $\overline{y} =
(\overline{y_1} \ \cdots \ \overline{y_M})^T$. It is direct to check that the
above statements are equivalence relations. A set of vectors $\Phi =
\{\varphi_{n}: n=1,\ldots,N\}$ is called a {\it frame} for ${\mathbb C}^M$ if
there exists $0<A\le B<\infty$ such that $$ A\|x\|^2\le \sum_{n=1}^N
|\left<x,\varphi_n\right>|^2\le B\|x\|^2, \ \mbox{for all} \ x\in{\mathbb C}^M.
$$ Here, $\varphi_n$ may be taken from ${\mathbb R}^M$ or ${\mathbb C}^M$.  No
matter where the $\varphi_n$ are taken, a frame  $\Phi$ for  ${\mathbb C}^M$
must be a spanning set of ${\mathbb C}^M$.  The ratio of the frame bounds,
$B/A$, control the robustness of the reconstruction. However, we will not be
discussing the stability problem, so we will identify our frame $\Phi$ as a
full-rank $M \times N$ (short-fat) matrix $\Phi$ with entries taken over
${\mathbb R}$ or  ${\mathbb C}$. i.e.  $$ \Phi = \left[\begin{array}{cccc} \mid
& \mid & \cdots & \mid \\ \varphi_1 & \varphi_2 & \cdots & \varphi_N \\ \mid &
\mid & \cdots & \mid \\ \end{array} \right].  $$ If all $\varphi_n\in{\mathbb
R}^M$, we will identify $\Phi$ as an element in ${\mathbb R}^{M\times N}$.
Otherwise, $\Phi$ is identified as an element in ${\mathbb C}^{M\times N}$. On
$\R^{M \times N}$ we endow it with the standard Euclidean topology. On $\C^{M
\times N} = \R^{2M \times 2N}$ we endow it with the Euclidean topology by
considering the real and imaginary parts of each complex entry as separate
coordinates. Putting also the standard Lebesgue measure on ${\mathbb R}^{M\times
N}$ and ${\mathbb R}^{2M\times 2N}$, we have the following definition:

 \begin{definition}\label{def:generic} Let $\F = \R \ \mbox{or} \ \C$ and let $Y
 \subseteq \F^{M \times N} $ be the set of full-rank $M \times N$ matrices over
 $\F$ with some specified property $\mathcal{P}.$ If $Y$ is open and dense in $\F^{M \times N}$ and
 $X:=Y^c$ has Lebesgue measure $0,$ we say that each frame $\Phi \in Y$ is
 called a \textbf{generic frame} with property $\mathcal{P}.$ \end{definition}

For most of the frame theory literature, $X: = Y^c$ is an real algebraic
variety, which means $X$ can be represented as a common  zero set of a finite
number of polynomial equations. It is well-known that for an algebraic variety,
$Y$ is either empty or an open-dense set with full Lebesgue measure. Thus, a
frame in $Y$ will be a generic frame. Our goal is not only to show that it is
possible for  real frames to perform conjugate phase retrieval, but to also
determine as much as possible, $$ N^{\ast}(M): = \min\{N: \mbox{ a generic
frame} \ \Phi\subset{\mathbb R}^{M\times N} \  \mbox{is conjugate phase
retrievable on} \ \C^M \} $$ $$ N_{\ast}(M) : = \min\{N: \mbox{there exists} \
\Phi\subset{\mathbb R}^{M\times N} \  \mbox{ which is conjugate phase
retrievable on} \ \C^M\}.  $$ The main idea of theory will be to develop the
phase-lift setup for the conjugate phase retrieval. Phase-lift has been the
central idea for complex phase retrieval \cite[and references therein]{sphase},
which linearizes the absolute value of the inner product.

\medskip

\subsection{Notation.} Let ${\mathbb H}^{M\times M}_{{\mathbb C}}$ be the set of
all complex $M\times M$ Hermitian matrices $(H = H^{\ast}$ with $\ast$ denotes
the conjugate transpose$)$ and let ${\mathbb H}^{M\times M}_{{\mathbb R}}$ be
the set of all real $M\times M$ symmetric matrices ($H = H^T$). Both sets form
vector spaces over the real numbers.  Given $H\in {\mathbb H}^{M\times
M}_{{\mathbb C}}$, we define $$ \mbox{Re}(H) = [{\rm Re}(h_{ij})]\in {\mathbb
H}^{M\times M}_{{\mathbb R}}, $$ where Re$(z)$ denote the real part of the
complex number $z$. We will use similar notation as in \cite{B15} for spaces of
Hermitian/symmetric matrices of lower rank. For $1\le r\le M$, we define also
the set $ {\mathcal S}^r_{{\mathbb C}}$ (respectively ${\mathcal S}^r_{{\mathbb
R}}$) to be the set of $Q\in {\mathbb H}^{M\times M}_{{\mathbb C}}$
(respectively ${\mathbb H}^{M\times M}_{{\mathbb R}}$) whose rank is at most
$r$. For non-negative integers $p,q$ such that $p+q\le M$, we  define also $$
{\mathcal S}_{\mathbb F}^{p,q} = \{Q\in{\mathbb H}^{M\times M}_{{\mathbb F}}: Q
\  \mbox{has at most $p$ positive eigenvalues and at most $q$ negative
eigenvalues}\} $$ where ${\mathbb F} = \C$ or $\R$.  Of particular interest is
the subclass ${\mathcal S}_{\mathbb C}^{1,0}$ and ${\mathcal S}_{\mathbb
C}^{1,1}$, which is known to have the following representation: $$ {\mathcal
S}_{\mathbb C}^{1,0}  =  \{xx^{\ast}:x\in\C^M \} $$ $$ {\mathcal S}_{\mathbb
C}^{1,1}  ={\mathcal S}_{\mathbb C}^{1,0}-{\mathcal S}_{\mathbb C}^{1,0} $$ (See
\cite[Lemma 3.7]{B15}).

 \medskip

 \subsection{Results on Phase-lift.} Below we provide necessary
 and sufficient conditions for conjugate phase retrievability in terms of the
corresponding phase-lift map, with proofs following in
Section~\ref{sec:outer}.

First, we have the following important
characterization about (conjugate) equivalent vectors.
\begin{theorem}\label{thm:equiv} For any $x,y\in{\mathbb C}^M$,
\begin{enumerate} \item $x\sim y$ if and only if $xx^{\ast} =
yy^{\ast}$.\label{phase} \item $x\overset{\mathrm{conj}}{\sim} y$ if and only if

{\rm Re}$(xx^{\ast}) =${\rm Re}$ (yy^{\ast})$.\label{conjphase} \end{enumerate}
\end{theorem}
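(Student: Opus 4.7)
The plan is to treat (1) first as a warmup, then use the real-imaginary decomposition to reduce (2) to a classical statement about real Gram-like factorizations. For (1), the forward direction is immediate: if $x=e^{i\theta}y$ then $xx^{\ast}=e^{i\theta}e^{-i\theta}yy^{\ast}=yy^{\ast}$. For the converse, if $x=0$ then $yy^{\ast}=0$ forces $y=0$; otherwise $xx^{\ast}$ is a rank-one matrix with range $\operatorname{span}(x)$, and the equation $xx^{\ast}=yy^{\ast}$ forces $y$ to be a scalar multiple $y=cx$. Plugging back in, $|c|^{2}xx^{\ast}=xx^{\ast}$ gives $|c|=1$, hence $c=e^{i\theta}$.

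For (2), the forward direction is a short computation: if $x=e^{i\theta}y$ then $xx^{\ast}=yy^{\ast}$, while if $x=e^{i\theta}\overline{y}$ then $xx^{\ast}=\overline{y}\,\overline{y}^{\ast}=\overline{yy^{\ast}}$, and in either case $\operatorname{Re}(xx^{\ast})=\operatorname{Re}(yy^{\ast})$. The converse is the substantive part. The key reduction is to decompose $x=a+ib$ and $y=c+id$ with $a,b,c,d\in\mathbb{R}^{M}$ and to expand
$$xx^{\ast}=(aa^{T}+bb^{T})+i(ba^{T}-ab^{T}),$$
so that $\operatorname{Re}(xx^{\ast})=\operatorname{Re}(yy^{\ast})$ becomes the matrix identity
$$aa^{T}+bb^{T}=cc^{T}+dd^{T},\qquad\text{i.e.,}\qquad XX^{T}=YY^{T},$$
where $X=[a\mid b]$ and $Y=[c\mid d]$ are $M\times 2$ real matrices.

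The next step is to invoke the classical fact that $XX^{T}=YY^{T}$ for such $X,Y$ implies the existence of a $2\times 2$ orthogonal matrix $O$ with $X=YO$, which follows from matching SVDs of the two sides. Writing $O$ in its two canonical forms, one handles the determinant $+1$ case and the determinant $-1$ case separately. A direct substitution of the rotation $O=\bigl(\begin{smallmatrix}\cos\theta&-\sin\theta\\ \sin\theta&\cos\theta\end{smallmatrix}\bigr)$ into $x=a+ib$ yields $x=e^{-i\theta}y$, and the reflection $O=\bigl(\begin{smallmatrix}\cos\theta&\sin\theta\\ \sin\theta&-\cos\theta\end{smallmatrix}\bigr)$ yields $x=e^{i\theta}\overline{y}$, which are precisely the two cases comprising $x\overset{\mathrm{conj}}{\sim}y$.

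The step I expect to be most delicate is producing the orthogonal factor $O$ when $X$ (equivalently $Y$) is rank deficient, since the polar decomposition only immediately gives the result at full column rank. I would dispatch the rank-zero case trivially ($X=Y=0$) and for rank one write $X=uv_{1}^{T}$, $Y=uv_{2}^{T}$ with matching $\|v_{1}\|=\|v_{2}\|$, then choose any $2\times 2$ orthogonal $O$ sending $v_{2}$ to $v_{1}$. In every such factorization $O$ is still either a rotation or a reflection, so the case analysis above still delivers the dichotomy between phase and conjugate-phase equivalence.
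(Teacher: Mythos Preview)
Your proof is correct and takes a genuinely different route from the paper's. The paper argues entrywise: from $\Re(x_i\overline{x_j})=\Re(y_i\overline{y_j})$ and $|x_i\overline{x_j}|=|y_i\overline{y_j}|$ it deduces that for each pair $(i,j)$ either $x_i\overline{x_j}=y_i\overline{y_j}$ or $x_i\overline{x_j}=\overline{y_i}y_j$, and then runs an induction on $M$ (with a pigeonhole step at $M=3$) to force the same alternative to hold uniformly over all pairs. Your argument instead packages $x=a+ib$ into the $M\times 2$ real matrix $X=[a\mid b]$, reduces $\Re(xx^{\ast})=\Re(yy^{\ast})$ to the Gram identity $XX^{T}=YY^{T}$, and invokes the classical fact that this forces $X=YO$ for some $O\in O(2)$; the dichotomy $SO(2)$ versus $O(2)\setminus SO(2)$ then cleanly becomes $x\sim y$ versus $x\sim\overline{y}$. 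Your approach is more structural and explains \emph{why} the conjugate ambiguity appears (it is exactly the reflection component of $O(2)$ acting on the real--imaginary pair), at the cost of importing the Gram-factorization lemma; the paper's approach is entirely self-contained and elementary but longer and more case-laden. Your handling of the rank-deficient cases is adequate: rank~$0$ is trivial, and in rank~$1$ one can indeed normalize $X=uv_1^{T}$, $Y=uv_2^{T}$ with a common $u$ (since $XX^{T}=YY^{T}$ forces the same one-dimensional range) and then pick any $O\in O(2)$ with $O^{T}v_2=v_1$.
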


Given a finite set of frame vectors $\Phi: =
\{\varphi_n:n=1,\ldots,N\}\subset{\mathbb R}^M,$ we also note that $$
|\left<x,\varphi_n\right>|^2 = \varphi_n^{\ast} (xx^{\ast}) \varphi_n.  $$
Therefore, the following {\it phase-lift map} will play an important role: $$
{\mathcal A} :{\mathbb H}_{\mathbb R}^{M\times M} \longrightarrow {\mathbb R}^N
, \  {\mathcal A}(Q) : = (\varphi_1^TQ\varphi_1,\ldots,\varphi_N^TQ\varphi_N)^T.
$$

Define similarly the phase-lift map ${\mathcal A}_{\mathbb C}$ on ${\mathbb
H}_{\mathbb C}^{M\times M}$  with transpose replaced by conjugate transpose.  It
was proved in (\cite[Proposition 2]{Wolf}, see also \cite[Lemma 1.9]{sphase} and
\cite[Theorem 2.2]{B15}) that \begin{lemma} A complex frame $\Phi$ is complex
phase retrievable if and only if  $\ker ({\mathcal A}_{\mathbb C})\cap {\mathcal
S}^{1,1}_{\mathbb C} = \{O\}.$ \end{lemma}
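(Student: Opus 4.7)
The plan is to reformulate complex phase retrievability entirely in terms of the phase-lift map using the identity $|\langle x,\varphi_n\rangle|^2 = \varphi_n^\ast(xx^\ast)\varphi_n$, which says precisely that the vector of squared moduli of the measurements of $x$ equals $\mathcal{A}_{\mathbb{C}}(xx^\ast)$. From here the two measurement conditions $|\langle x,\varphi_n\rangle|=|\langle y,\varphi_n\rangle|$ for all $n$ collapse, by linearity of $\mathcal{A}_{\mathbb{C}}$, to the single statement $xx^\ast - yy^\ast \in \ker(\mathcal{A}_{\mathbb{C}})$. Combined with Theorem~\ref{thm:equiv}(\ref{phase}), which identifies $x\sim y$ with $xx^\ast = yy^\ast$, the question of phase retrievability becomes one about when the only rank-structured element of the kernel is zero.

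Concretely, I would argue both directions as follows. For the forward direction, assume $\Phi$ is complex phase retrievable and pick any $Q \in \ker(\mathcal{A}_{\mathbb{C}}) \cap \mathcal{S}^{1,1}_{\mathbb{C}}$. Using the representation $\mathcal{S}^{1,1}_{\mathbb{C}} = \mathcal{S}^{1,0}_{\mathbb{C}} - \mathcal{S}^{1,0}_{\mathbb{C}}$ recalled in the Notation subsection, write $Q = xx^\ast - yy^\ast$ for some $x,y\in\mathbb{C}^M$. Then $\mathcal{A}_{\mathbb{C}}(xx^\ast) = \mathcal{A}_{\mathbb{C}}(yy^\ast)$, which gives $|\langle x,\varphi_n\rangle| = |\langle y,\varphi_n\rangle|$ for every $n$. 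Phase retrievability forces $x\sim y$, and by Theorem~\ref{thm:equiv}(\ref{phase}), $xx^\ast = yy^\ast$, so $Q=O$. For the converse, assume $\ker(\mathcal{A}_{\mathbb{C}}) \cap \mathcal{S}^{1,1}_{\mathbb{C}} = \{O\}$ and suppose $|\langle x,\varphi_n\rangle| = |\langle y,\varphi_n\rangle|$ for all $n$. Then $xx^\ast - yy^\ast \in \mathcal{S}^{1,1}_{\mathbb{C}}$ lies in $\ker(\mathcal{A}_{\mathbb{C}})$, hence must be zero, and Theorem~\ref{thm:equiv}(\ref{phase}) again yields $x\sim y$.

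There is essentially no hard step: the proof is bookkeeping once one has the phase-lift identity, the linearity of $\mathcal{A}_{\mathbb{C}}$, and the structural fact $\mathcal{S}^{1,1}_{\mathbb{C}} = \mathcal{S}^{1,0}_{\mathbb{C}} - \mathcal{S}^{1,0}_{\mathbb{C}}$. The only subtle point worth emphasizing is that one must invoke the representation of $\mathcal{S}^{1,1}_{\mathbb{C}}$ as a difference of rank-one positive semidefinite matrices in order to reverse-engineer the $x,y$ from an arbitrary $Q$ in the kernel intersection; without this, one would only obtain a characterization in terms of differences $xx^\ast - yy^\ast$ rather than in terms of the geometrically cleaner set $\mathcal{S}^{1,1}_{\mathbb{C}}$. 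This is precisely the role played by \cite[Lemma~3.7]{B15} cited in the notation subsection, so the proof is short and self-contained given the preceding material.
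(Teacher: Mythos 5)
Your proof is correct and follows exactly the route the paper indicates: linearity of $\mathcal{A}_{\mathbb C}$, Theorem~\ref{thm:equiv}(1), and the decomposition $\mathcal{S}^{1,1}_{\mathbb C}=\mathcal{S}^{1,0}_{\mathbb C}-\mathcal{S}^{1,0}_{\mathbb C}$. The paper only sketches this argument in one sentence (citing external references for the full proof), and your write-up is a faithful, correct expansion of that same sketch.
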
 The lemma can be obtained using the
linearity of  ${\mathcal A}_{\mathbb C}$ and Theorem \ref{thm:equiv} (1) with
the fact that ${\mathcal S}_{\mathbb C}^{1,1} = {\mathcal S}_{\mathbb
C}^{1,0}-{\mathcal S}_{\mathbb C}^{1,0}$, which means all matrices from
${\mathcal S}_{\mathbb C}^{1,1}$ can be written as  $xx^{\ast}-yy^{\ast}$ for
some $x,y \in \C^M.$

\medskip

The following provides the analogous theorem for conjugate phase retrieval.

\begin{theorem} \label{thm:S} Let $\Phi: =
\{\varphi_n:n=1,\ldots,N\}\subset{\mathbb R}^M$ be a finite set of frame
vectors.  \begin{enumerate} \item $\Phi$ is conjugate phase retrievable if and
only if $\ker ({\mathcal A})\cap \mbox{\rm Re} ({\mathcal S}^{1,1}_{\mathbb C})
= \{O\}$, \\ where {\rm Re}$({\mathcal S}^{1,1}_{\mathbb C}) = \{{\rm Re}(Q):
Q\in {\mathcal S}^{1,1}_{\mathbb C}\}$.  \item If $\ker ({\mathcal A})\cap
{\mathcal S}^4_{\mathbb R} = \{O\}$, then $\Phi$ is conjugate phase retrievable.
\end{enumerate} \end{theorem}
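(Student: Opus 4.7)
The plan is to mimic the complex phase retrieval lemma but substitute $xx^\ast = yy^\ast$ by its real-part analogue from Theorem \ref{thm:equiv}(2). Two algebraic observations drive everything. First, because $\varphi_n$ is real and $Q$ Hermitian, the imaginary part $\mathrm{Im}(Q)$ is antisymmetric, so $\varphi_n^T \mathrm{Im}(Q)\varphi_n=0$ and hence $\varphi_n^T Q\varphi_n = \varphi_n^T \mathrm{Re}(Q)\varphi_n$. In particular,
$$
|\langle x,\varphi_n\rangle|^2 = \varphi_n^T(xx^\ast)\varphi_n = \varphi_n^T \mathrm{Re}(xx^\ast)\varphi_n.
$$
Second, Theorem \ref{thm:equiv}(2) says $x\overset{\mathrm{conj}}{\sim} y$ is equivalent to $\mathrm{Re}(xx^\ast)=\mathrm{Re}(yy^\ast)$. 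These two facts already give part (1) with almost no work.

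For (1), suppose first that $\ker(\mathcal{A})\cap \mathrm{Re}(\mathcal{S}^{1,1}_{\mathbb{C}})=\{O\}$ and that $|\langle x,\varphi_n\rangle|=|\langle y,\varphi_n\rangle|$ for all $n$. Then $R:=\mathrm{Re}(xx^\ast)-\mathrm{Re}(yy^\ast)=\mathrm{Re}(xx^\ast-yy^\ast)$ lies in $\mathrm{Re}(\mathcal{S}^{1,1}_{\mathbb{C}})$ because $xx^\ast-yy^\ast\in\mathcal{S}^{1,1}_{\mathbb{C}}$, and by the first observation it lies in $\ker(\mathcal{A})$, so $R=O$ and Theorem \ref{thm:equiv}(2) gives $x\overset{\mathrm{conj}}{\sim}y$. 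Conversely, if $\Phi$ is conjugate phase retrievable and $R\in \ker(\mathcal{A})\cap \mathrm{Re}(\mathcal{S}^{1,1}_{\mathbb{C}})$, write $R=\mathrm{Re}(xx^\ast-yy^\ast)$. Then $\varphi_n^T R\varphi_n=0$ translates back to $|\langle x,\varphi_n\rangle|=|\langle y,\varphi_n\rangle|$ for all $n$, forcing $x\overset{\mathrm{conj}}{\sim} y$, and hence $\mathrm{Re}(xx^\ast)=\mathrm{Re}(yy^\ast)$, i.e.\ $R=O$.

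For (2), the key structural claim is that $\mathrm{Re}(\mathcal{S}^{1,1}_{\mathbb{C}})\subseteq \mathcal{S}^4_{\mathbb{R}}$, after which (2) follows immediately from (1). To see the inclusion, any $Q\in \mathcal{S}^{1,1}_{\mathbb{C}}$ has the form $Q=xx^\ast-yy^\ast$. Splitting $x=x_1+ix_2$ and $y=y_1+iy_2$ with $x_j,y_j\in \mathbb{R}^M$, a direct expansion yields
$$
\mathrm{Re}(xx^\ast) = x_1x_1^T + x_2x_2^T, \qquad \mathrm{Re}(yy^\ast) = y_1y_1^T + y_2y_2^T,
$$
so $\mathrm{Re}(Q) = x_1x_1^T + x_2x_2^T - y_1y_1^T - y_2y_2^T$, a real symmetric matrix of rank at most four. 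Thus $\mathrm{Re}(Q)\in \mathcal{S}^4_{\mathbb{R}}$, which proves the claim and hence the implication in (2).

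There is no real obstacle beyond bookkeeping; the only subtle point is making sure one uses the Hermitian/antisymmetric splitting to pass between $Q$ and $\mathrm{Re}(Q)$ under the quadratic form with real $\varphi_n$. The inclusion $\mathrm{Re}(\mathcal{S}^{1,1}_{\mathbb{C}})\subseteq \mathcal{S}^4_{\mathbb{R}}$ is strict in general, which is why (2) is only a sufficient condition, and indicates where a finer analysis would be needed if one wanted a matching necessary condition at the level of rank-4 real symmetric matrices.
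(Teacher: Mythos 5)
Your proof is correct and follows essentially the same route as the paper: part (1) is the same translation through the phase-lift identity $|\langle x,\varphi_n\rangle|^2=\varphi_n^T\Re(xx^{\ast})\varphi_n$ together with Theorem \ref{thm:equiv}(2) and the representation $\mathcal{S}^{1,1}_{\mathbb{C}}=\mathcal{S}^{1,0}_{\mathbb{C}}-\mathcal{S}^{1,0}_{\mathbb{C}}$, and part (2) rests on the same inclusion $\Re(\mathcal{S}^{1,1}_{\mathbb{C}})\subseteq\mathcal{S}^{4}_{\mathbb{R}}$. The only (harmless) difference is how the rank bound is obtained: you use the explicit real decomposition $\Re(xx^{\ast})=x_1x_1^T+x_2x_2^T$, whereas the paper writes $\Re(xx^{\ast})=\tfrac{1}{2}\bigl(xx^{\ast}+\overline{x}\,\overline{x}^{\ast}\bigr)$ and invokes subadditivity of rank; both yield rank at most $4$ for $\Re(xx^{\ast}-yy^{\ast})$.
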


The proof of (2) is obtained by proving $\mbox{\rm Re} ({\mathcal
S}^{1,1}_{\mathbb C})$ are contained inside ${\mathcal S}^4_{\mathbb R}$ and
thus (2) follows from (1). However, we believe that the containment should be
strict.

%
\medskip

\subsection{Results on conjugate phase retrieval on $\C^M$.} The phase-lift
setup gives us the complete solution to $M=2$ and $3,$ provided below and
proven in Section~\ref{sec:C2C3}.

\begin{theorem}\label{thm:th1} $N^{\ast}(2) = N_{\ast}(2)= 3$ and $N^{\ast}(3)
=N_{\ast}(3)= 6$. Moreover, \begin{enumerate} \item If $M=2$, $\Phi =
\begin{bmatrix} a_1 & b_1 & c_1 \\ a_2 & b_2 & c_2 \end{bmatrix}$ in ${\mathbb
R}^{2\times 3}$ is conjugate phase retrievable on $\C^2$ if and only if
\begin{equation}\label{eq:det2} \det\left[ \begin{array}{ccc} a_1^2 & 2a_1a_2 &
a_2^2 \\ b_1^2 & 2b_1b_2 & b_2^2 \\ c_1^2 & 2c_1c_2 & c_2^2 \end{array}
\right]\ne 0.  \end{equation} \item If $M=3$, $\Phi = \begin{bmatrix} a_1 & b_1
& c_1 & d_1 & e_1&f_1 \\ a_2 & b_2 & c_2 & d_2 & e_2&f_2\\ a_3 & b_3& c_3& d_3 &
e_3&f_3\\ \end{bmatrix}$ in ${\mathbb R}^{3\times 6}$ is conjugate phase
retrievable on $\C^3$ if and only if \begin{equation}\label{eq:det3} \det\left[
\begin{array}{cccccc} a_1^2 & a_2^2  & a_3^2 & 2a_1a_2 & 2a_1a_3& 2a_2a_3\\
b_1^2 &b_2^2  & b_3^2 & 2b_1b_2 & 2b_1b_3&2b_2b_3\\ c_1^2 & c_2^2  & c_3^2 &
2c_1c_2 & 2c_1c_3&2c_2c_3\\ d_1^2 &d_2^2  & d_3^2 & 2d_1d_2 & 2d_1d_3&2d_2d_3\\
e_1^2 & e_2^2  & e_3^2 & 2e_1e_2 & 2e_1e_3&2e_2e_3\\ f_1^2 & f_2^2  & f_3^2 &
2f_1f_2 & 2f_1f_3&2f_2f_3\\ \end{array} \right]\ne 0.  \end{equation}
\end{enumerate} \end{theorem}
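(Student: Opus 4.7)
The plan is to invoke Theorem~\ref{thm:S}(1), which reduces conjugate phase retrievability of a real frame $\Phi \subset \mathbb{R}^{M \times N}$ to the condition $\ker(\mathcal{A}) \cap \Re(\mathcal{S}^{1,1}_{\mathbb{C}}) = \{O\}$. The first step is to obtain an explicit description of $\Re(\mathcal{S}^{1,1}_{\mathbb{C}})$. Writing $x = u + iv$ with $u, v \in \mathbb{R}^M$, a direct computation gives $\Re(xx^{\ast}) = uu^T + vv^T$, which is positive semidefinite of rank at most $2$; consequently every element of $\Re(\mathcal{S}^{1,1}_{\mathbb{C}})$ has at most two positive and at most two negative eigenvalues, i.e., lies in $\mathcal{S}^{2,2}_{\mathbb{R}}$. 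Conversely, any $Q \in \mathcal{S}^{2,2}_{\mathbb{R}}$ may be spectrally decomposed and grouped as $Q = (\lambda_1 e_1 e_1^T + \lambda_2 e_2 e_2^T) - (\mu_1 f_1 f_1^T + \mu_2 f_2 f_2^T)$ with $\lambda_i, \mu_j \ge 0$, and is realized as $\Re(xx^{\ast} - yy^{\ast})$ with $x = \sqrt{\lambda_1}\, e_1 + i\sqrt{\lambda_2}\, e_2$ and $y = \sqrt{\mu_1}\, f_1 + i\sqrt{\mu_2}\, f_2$. Hence $\Re(\mathcal{S}^{1,1}_{\mathbb{C}}) = \mathcal{S}^{2,2}_{\mathbb{R}}$.

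Next, I would express $\mathcal{A}: \mathbb{H}^{M\times M}_{\mathbb{R}} \to \mathbb{R}^N$ as a matrix by choosing the basis $\{E_{ii}\}_i \cup \{E_{ij} + E_{ji}\}_{i<j}$. For $\varphi \in \mathbb{R}^M$, the functional $Q \mapsto \varphi^T Q \varphi$ evaluates on this basis to $\varphi_i^2$ and $2\varphi_i\varphi_j$, respectively. For $M = 2$ with $N = 3$ this is precisely the $3 \times 3$ matrix in \eqref{eq:det2}; for $M = 3$ with $N = 6$ it is the $6 \times 6$ matrix in \eqref{eq:det3}.

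The case $M = 2$ is then essentially linear algebra: every $2 \times 2$ real symmetric matrix automatically lies in $\mathcal{S}^{2,2}_{\mathbb{R}}$, so conjugate phase retrievability collapses to $\ker \mathcal{A} = \{O\}$, which for $N = 3$ is exactly the nonvanishing of \eqref{eq:det2}, proving part~(1). For $N \le 2$ injectivity fails on dimensional grounds against the $3$-dimensional space $\mathbb{H}^{2 \times 2}_{\mathbb{R}}$, giving $N_{\ast}(2) \ge 3$; and since \eqref{eq:det2} is a nontrivial polynomial in the frame entries, its nonvanishing locus is open, dense, and of full Lebesgue measure, yielding $N^{\ast}(2) = 3$.

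For $M = 3$ the subtle point is that the signature constraint defining $\mathcal{S}^{2,2}_{\mathbb{R}}$ is automatically met by any nontrivial kernel element. Assume (without loss of generality, by discarding any zero frame vectors) that all $\varphi_n \ne 0$. If $Q \in \ker(\mathcal{A}) \setminus \{O\}$ were positive or negative definite, every $\varphi_n^T Q \varphi_n$ would be strictly positive or strictly negative, contradicting $\mathcal{A}(Q) = 0$. Hence any nonzero $Q \in \ker(\mathcal{A})$ is indefinite, and in $\mathbb{H}^{3 \times 3}_{\mathbb{R}}$ this forces signature $(p,q)$ with $p, q \le 2$, placing $Q$ in $\mathcal{S}^{2,2}_{\mathbb{R}}$. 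Combined with the matrix description, this makes conjugate phase retrievability for $N = 6$ equivalent to the nonvanishing of the determinant in \eqref{eq:det3}, proving part~(2); for $N \le 5$ the same argument produces a nonzero $Q \in \ker(\mathcal{A}) \cap \mathcal{S}^{2,2}_{\mathbb{R}}$, giving $N_{\ast}(3) \ge 6$, while the determinant condition being a nontrivial polynomial (achievable by an explicit $6$-vector frame) gives $N^{\ast}(3) = 6$. The main obstacle is the first paragraph---pinning down $\Re(\mathcal{S}^{1,1}_{\mathbb{C}})$ exactly as $\mathcal{S}^{2,2}_{\mathbb{R}}$ and observing that definite matrices never lie in $\ker(\mathcal{A})$ when all $\varphi_n \ne 0$; once these are in hand, the remaining steps are matrix algebra.
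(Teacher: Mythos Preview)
Your proof is correct and takes a genuinely different, and in fact cleaner, route than the paper's. The key step is your identification $\Re(\mathcal{S}^{1,1}_{\mathbb{C}}) = \mathcal{S}^{2,2}_{\mathbb{R}}$ via the observation $\Re(xx^{\ast}) = uu^{T} + vv^{T}$ for $x = u + iv$, together with the spectral-decomposition converse. The paper never establishes this equality; it only proves the crude inclusion $\Re(\mathcal{S}^{1,1}_{\mathbb{C}}) \subset \mathcal{S}^{4}_{\mathbb{R}}$ (Lemma~\ref{lemma:rank2}), and in Section~\ref{sec:end} explicitly lists the structure of $\Re(\mathcal{S}^{1,1}_{\mathbb{C}})$ as unresolved. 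For $M=2$ the paper instead factorizes the determinant in~\eqref{eq:det2} explicitly and appeals to the complement property (Proposition~\ref{thm:CP}); for $M=3$ it proves Proposition~\ref{thm:prop4.1} by a lengthy hands-on construction in polar coordinates showing that every indefinite $3\times 3$ real symmetric matrix arises as $\Re(xx^{\ast}-yy^{\ast})$. Your argument handles both cases uniformly and with far less computation, and in fact works in every dimension---it would sharpen Theorem~\ref{thm:S}(2) from $\mathcal{S}^{4}_{\mathbb{R}}$ to $\mathcal{S}^{2,2}_{\mathbb{R}}$.

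One small imprecision: from ``$Q$ is not positive or negative definite'' you jump to ``$Q$ is indefinite,'' which skips the semidefinite case (e.g.\ signature $(2,0)$ when $M=3$). This does not affect the conclusion, since in dimension~$3$ any nonzero symmetric $Q$ that is not definite already has $p,q\le 2$ and hence lies in $\mathcal{S}^{2,2}_{\mathbb{R}}$; alternatively, a nonzero semidefinite $Q\in\ker\mathcal{A}$ would force every $\varphi_n$ into the proper subspace $\ker Q$, contradicting the frame condition. Either remark closes the gap.
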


It is easy to find vectors for which the above determinants are non-zero, so the
zero set of the determinant is non-empty and forms an algebraic variety. Thus
(2) and (3) implies that $N^{\ast}(2) = 3$ and $N^{\ast}(3) = 6$.

\medskip

Moreover, using the recent result of Wang and Xu \cite{WX}, we show that  for
$M\ge 4$,

\begin{theorem}\label{thm:main} Let $M\ge 4$. Suppose that $N\ge 4M-6$. Then a
generic frame $\Phi = \{\varphi_i: i=1,\ldots,N\}\subset {\mathbb R}^M$ is
conjugate phase retrievable.  \end{theorem}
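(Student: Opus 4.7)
The plan is to reduce Theorem~\ref{thm:main} to the sufficient condition furnished by Theorem~\ref{thm:S}(2), and then invoke the recent dimension-count result of Wang and Xu \cite{WX}, which supplies the precise threshold $N \ge 4M - 6$. By Theorem~\ref{thm:S}(2), it suffices to prove that for a generic $\Phi \in \R^{M \times N}$,
\[
    \ker(\mathcal{A}) \cap \mathcal{S}^4_{\R} = \{O\}.
\]
This is strictly stronger than what conjugate phase retrieval literally requires, since $\mathrm{Re}(\mathcal{S}^{1,1}_{\C}) \subsetneq \mathcal{S}^4_{\R}$ in general; however, the larger variety $\mathcal{S}^4_{\R}$ is precisely the object controlled by the Wang--Xu framework, which is what makes the threshold $4M - 6$ attainable.

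\medskip

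The number $4M - 6 = 4M - \binom{4}{2}$ equals the dimension of the determinantal variety $\mathcal{S}^4_{\R}$ for $M \ge 4$. Heuristically, $N$ linear conditions should cut the cone $\mathcal{S}^4_{\R} \subset \mathbb{H}^{M\times M}_{\R}$ down to $\{O\}$ once $N$ reaches this dimension. The rigorous form of this heuristic, for the restricted family of kernels arising from rank-one \emph{real} measurements, is precisely what I would quote from \cite{WX}: for $N \ge 4M - 6$, a generic choice of $\varphi_1,\ldots,\varphi_N \in \R^M$ yields $\ker(\mathcal{A}) \cap \mathcal{S}^4_{\R} = \{O\}$. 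Applying this and combining with Step one closes the argument, modulo the verification that the good set is generic in the sense of Definition~\ref{def:generic}.

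\medskip

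To match Definition~\ref{def:generic}, I would check that the bad set
\[
    X = \{\Phi \in \R^{M \times N} : \ker(\mathcal{A}) \cap \mathcal{S}^4_{\R} \ne \{O\}\}
\]
is closed in $\R^{M \times N}$ (by compactness of the unit sphere inside the cone $\mathcal{S}^4_{\R}$, the existence of a nonzero $Q$ with $\mathcal{A}_\Phi(Q) = 0$ is a closed condition on $\Phi$) and is contained in a proper real algebraic subvariety---this follows from the polynomial description of $\mathcal{S}^4_{\R}$ via vanishing of $5 \times 5$ minors together with the linear system $\mathcal{A}(Q) = 0$ and Tarski--Seidenberg elimination. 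Since \cite{WX} provides at least one good $\Phi$, this subvariety is proper and hence of Lebesgue measure zero, and its complement is open and dense.

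\medskip

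The only substantive difficulty is the Wang--Xu input itself: their argument uses tangent-space and secant-variety computations on the image of the rank-one Veronese-type map to show that rank-one measurements are ``generic enough'' despite being constrained to the lower-dimensional locus $\{\varphi\varphi^T : \varphi \in \R^M\}$ inside the space of symmetric matrices. Everything else in the plan is bookkeeping: the reduction through Theorem~\ref{thm:S}(2) and the topological consequences of the algebraic condition. One could hope to sharpen the bound $4M - 6$ by analyzing $\mathrm{Re}(\mathcal{S}^{1,1}_{\C})$ directly instead of passing through the strict inclusion into $\mathcal{S}^4_{\R}$, but that is outside the scope of the present plan.
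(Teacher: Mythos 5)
Your plan follows the paper's proof essentially verbatim: reduce via Theorem~\ref{thm:S}(2) to showing $\ker(\mathcal{A})\cap\mathcal{S}^4_{\R}=\{O\}$ for generic $\Phi$, note that $\dim \mathcal{S}^4_{\R}=4M-\binom{4}{2}=4M-6$, and invoke the Wang--Xu genericity machinery, finishing with the standard algebraic-variety argument for genericity. The one substantive step you defer entirely to \cite{WX} is carried out explicitly in the paper, namely the verification that the rank-one variety $V_1$ is admissible with respect to the linear functionals $A\mapsto \operatorname{Tr}(AQ)$ for nonzero $Q$ of rank at most $4$ (done by perturbing $A_0=uv^T$ to $(u+tz)(v+tw)^T$ and choosing $z,w$ with $w^TQ_0z\ne 0$), together with the observation that positive semidefinite rank-one matrices form an open subset of $V_1$, so genericity of $(\varphi_n\varphi_n^T)$ transfers to genericity of $(\varphi_n)$.
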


\medskip

Theorem \ref{thm:equiv} and Theorem \ref{thm:S} will be proved in
Section~\ref{sec:outer}. Theorem \ref{thm:th1} will be proved in Section
\ref{sec:C2C3}, and Theorem
\ref{thm:main} will be proved in Section \ref{sec:generic}. Finally, we will
discuss strict conjugate phase retrievability in Section \ref{sec:strict
conjugate section}.

\section{Complement property and Phase-lift}\label{sec:prelim}

\subsection{The Complement Property} A set of vectors $\{\varphi_n\}_{n=1}^N$ in
a complex Hilbert space ${\mathcal H}$ is said to have the {\bf \it complement
property} if  for any subset $I$ in \{1,\ldots,N\}, $$ \mbox{span} \{\varphi_n:
n\in I\} = {\mathcal H} \ \mbox{or} \ \mbox{span} \{\varphi_n: n\in I^c\} =
{\mathcal H}.  $$ The complement property is known to be the fundamental
property for phase retrieval \cite{BCE}\cite{sphase}.  We first derive
complement property as a necessary condition for conjugate phase retrieval by
real-valued vectors. We say that a vector $\varphi\in \C^M$ is real-valued if
all entries are real numbers. The relationship between the real span of a real
frame in $\R^m$ and the complex span of the same frame in $\C^M$ is crucial for
the proof of complement property.

\begin{lemma}\label{thm:spans} A collection of real-valued vectors $\{ \varphi_n
\}_{n = 1}^{N}$ in  $\C^M$ has \[\spn_{\C} \{\varphi_n \}_{n = 1}^{N} =  \C^M
\text{ if and only if } \spn_{\R} \{\varphi_n \}_{n = 1}^{N} = \R^m. \]
\end{lemma}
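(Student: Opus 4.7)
The plan is to exploit the real/imaginary decomposition in $\C^M$, together with the fact that the $\varphi_n$ lie in $\R^M$. Every $z \in \C^M$ splits uniquely as $z = \Re(z) + i\,\Im(z)$ with $\Re(z), \Im(z) \in \R^M$, and I would use this decomposition in both directions of the equivalence. Equivalently, one could just cite the standard linear algebra fact that the rank of a real matrix is the same whether viewed as an element of $\R^{M \times N}$ or of $\C^{M \times N}$; I will instead give a short direct argument.

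For the forward implication, I would assume $\spn_{\R}\{\varphi_n\}_{n=1}^N = \R^M$. Given any $z \in \C^M$, I would write $\Re(z) = \sum_n a_n \varphi_n$ and $\Im(z) = \sum_n b_n \varphi_n$ with $a_n, b_n \in \R$, and then combine to get $z = \sum_n (a_n + i b_n)\varphi_n \in \spn_{\C}\{\varphi_n\}_{n=1}^N$. Since $z$ was arbitrary, this direction is immediate.

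For the converse, suppose $\spn_{\C}\{\varphi_n\}_{n=1}^N = \C^M$. Pick any $x \in \R^M \subset \C^M$ and write $x = \sum_n (a_n + i b_n)\varphi_n$ with $a_n, b_n \in \R$. Because each $\varphi_n$ is real-valued, separating real and imaginary parts yields the two identities $x = \sum_n a_n \varphi_n$ and $0 = \sum_n b_n \varphi_n$. The first identity exhibits $x$ as a real linear combination of the $\varphi_n$, so $\R^M \subseteq \spn_{\R}\{\varphi_n\}_{n=1}^N$, and the reverse containment is automatic. The only place where the reality of the $\varphi_n$ is used is this last step; without it, the splitting of the coefficients into real and imaginary parts would not correspond cleanly to the splitting of the vectors, and the argument would collapse. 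This is essentially the only subtlety --- there is no serious obstacle in the proof.
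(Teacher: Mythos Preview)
Your proof is correct and follows essentially the same approach as the paper: both arguments hinge on the decomposition $\spn_{\C}\{\varphi_n\} = \spn_{\R}\{\varphi_n\} \oplus i\,\spn_{\R}\{\varphi_n\}$, which the paper states as a direct sum identity and you unpack element-by-element via $z = \Re(z) + i\,\Im(z)$. The paper's version is slightly more compact, but the content is the same.
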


\begin{proof} Let $\{ \varphi_n \}_{n = 1}^{N}$ be a collection of real-valued
vectors in $\C^M.$ We write \[\spn_{\C} \{ \varphi_n \}_{n= 1}^{N} = \left\{
\sum\limits_{n = 1}^{N} z_n \varphi_n \mid z_n \in \C \right\} = \left\{
\sum\limits_{n = 1}^{N} (a_n + ib_n) \varphi_n \mid a_n,b_n \in \R \right\}.\]
After distributing  $(a_n + ib_n)\varphi_n$ we receive \[\spn_{\C} \{ \varphi_n
\}_{n= 1}^{N} = \spn_{\R} \{ \varphi_n \}_{n= 1}^{N} \oplus \spn_{\R} \{
i\varphi_n \}_{n= 1}^{N}.\] Suppose that $\{ \varphi_n \}_{n = 1}^{N}$ spans
$\C^M.$ Since $\C^M$ is the direct sum of $\R^m$ and $i\R^M,$ we conclude that
$\spn_{\R} \{ \varphi_n \}_{n= 1}^{N} = \R^M.$ Conversely, if $\spn_{\R} \{
\varphi_n \}_{n= 1}^{N} = \R^M,$ then $\spn_{\R} \{ i \varphi_n \}_{n= 1}^{N} =
i\R^M$ and we can say that $\spn_{\C} \{\varphi_n \}_{n = 1}^{N} =  \C^M.$
\end{proof}

\begin{proposition}\label{thm:CP} Every conjugate phase retrievable frame in
$\C^M$ consisting of all real-valued vectors has the complement property in
$\C^M$.  \end{proposition}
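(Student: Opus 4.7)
The plan is to prove the contrapositive: if $\Phi$ fails the complement property in $\C^M$, I will produce two vectors $x,y$ with matching phaseless measurements but lying in distinct conjugate-equivalence classes. Assume there is $I\subseteq\{1,\ldots,N\}$ such that neither $\spn_{\C}\{\varphi_n:n\in I\}$ nor $\spn_{\C}\{\varphi_n:n\in I^c\}$ equals $\C^M$. Since every $\varphi_n$ is real-valued, Lemma \ref{thm:spans} transfers this failure to the real setting, so neither family spans $\R^M$ over $\R$. I can therefore pick nonzero $u,v\in\R^M$ with $u$ orthogonal (over $\R$) to $\{\varphi_n:n\in I^c\}$ and $v$ orthogonal to $\{\varphi_n:n\in I\}$, and set $x=u+v$, $y=u-v$.

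Next I would verify the matching measurement condition by a split on $n$. For $n\in I$ the term $\langle v,\varphi_n\rangle$ vanishes, so $\langle x,\varphi_n\rangle=\langle u,\varphi_n\rangle=\langle y,\varphi_n\rangle$; for $n\in I^c$ the term $\langle u,\varphi_n\rangle$ vanishes, so $\langle x,\varphi_n\rangle=\langle v,\varphi_n\rangle=-\langle y,\varphi_n\rangle$. In either case $|\langle x,\varphi_n\rangle|=|\langle y,\varphi_n\rangle|$, so the phaseless measurements of $x$ and $y$ agree.

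It then remains to argue $x\not\overset{\mathrm{conj}}{\sim}y$. First, both $x$ and $y$ are nonzero: if $x=u+v=0$ then $u=-v$, which combined with the two orthogonality conditions forces $v$ to be orthogonal to every $\varphi_n$, and the spanning property of $\Phi$ forces $v=0$, contradicting our choice. The same argument shows $y\neq 0$ and rules out $x=\pm y$, which would force $u=0$ or $v=0$. Since $x,y\in\R^M\setminus\{0\}$, the relation $x=e^{i\theta}y$ forces $e^{i\theta}\in\R$, hence $x=\pm y$; and because $y$ is real, $\overline{y}=y$, so the condition $x\sim\overline{y}$ collapses to $x\sim y$. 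Neither holds, so $\Phi$ is not conjugate phase retrievable.

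The only point worth flagging is conceptual rather than technical: the classical $u\pm v$ trick for phase retrieval only suffices for conjugate phase retrieval once one checks that the enlarged ambiguity $x\sim\overline{y}$ does not rescue the identification. This is precisely where reality of the $\varphi_n$ pays off, since it forces $x$ and $y$ themselves to be real, making complex conjugation trivial. Every other step reduces to routine manipulations with real inner products and the frame spanning property, so I do not expect any serious obstacle beyond checking these sign-related nonvanishing cases carefully.
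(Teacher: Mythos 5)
Your proof is correct, but it takes a different route from the paper's. The paper argues in the forward direction: it restricts the conjugate phase retrieval hypothesis to real inputs $x,y\in\R^M$, observes that for real $y$ the ambiguity $x\sim\overline{y}$ collapses to $x\sim y$ and hence $x=\pm y$, concludes that $\Phi$ is real phase retrievable on $\R^M$, and then invokes the Balan--Casazza--Edidin theorem that real phase retrievability implies the complement property in $\R^M$, finally transferring to $\C^M$ via Lemma \ref{thm:spans}. You instead prove the contrapositive from scratch: you run the classical $u\pm v$ construction that underlies the BCE necessity argument, producing real witnesses $x=u+v$, $y=u-v$ with matching measurements, and then check carefully that neither $x\sim y$ nor $x\sim\overline{y}$ can hold (the latter being free since $x,y$ are real). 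Your version is self-contained and makes explicit exactly where the reality of the $\varphi_n$ neutralizes the extra conjugation ambiguity; the paper's version is shorter because it outsources the combinatorial core to the known characterization of real phase retrieval. Both arguments rely on Lemma \ref{thm:spans} to move between real and complex spans, and your handling of the degenerate cases ($x$ or $y$ zero, $x=\pm y$) is complete.
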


\begin{proof} Let $\Phi = \{ \varphi_n \}_{n = 1}^{N}$ be a frame of real
vectors in $\C^M$ allowing conjugate phase retrieval. For any $x,y \in \R^M,$
$| \left< x, \varphi_n \right> |^2 = | \left< y, \varphi_n \right> |^2$ for all
$n = 1,\cdots N$ implies $x \sim y $ or $x \sim \conj{y} .$ Since $y$ is real we
conclude that $x \sim y$ and $x = \pm y$. Hence, $\Phi$ is real phase
retrievable on $\R^M$ and must have the complement property in $\R^M.$ Since the
complement property is defined by spanning properties, Lemma \ref{thm:spans}
implies that $\Phi$ must have the complement property in $\C^M.$ \end{proof}

 We currently do not know whether a conjugate phase retrievable complex frame
 must possess the complement property. We will discuss more on complex conjugate
 phase retrievable frames in section \ref{sec:strict conjugate section}.

 \subsection{Outer Products and Conjugate Equivalence}\label{sec:outer} We will now prove our
Theorem \ref{thm:equiv} and Theorem \ref{thm:S}, which will form our foundation
of the paper. Throughout the proof, we will denote by $[M]$ the set
$\{1,\ldots,M\}$ and ${\mathbb T}$ the circle group.

\medskip

\begin{proof}[Proof of Theorem \ref{thm:equiv} (1)] This part of the theorem
should be well-known, and we provide it here for completeness.  Suppose that
$xx^* = yy^*.$ Then, by comparing entries, we have  $x_i \conj{x_j} = y_i
\conj{y_j}$ for all $i,j \in [M].$ In particular, if $i=j$, then $|x_i|^2 =
|y_i|^2$ for all $i \in [M].$ This shows that  $x_k = \lambda_k y_k$ for some
$\lambda_k \in \T.$ Thus, given $i,j \in [M],$ $x_i \conj{x_j} = \lambda_i
\conj{ \lambda_j} y_i \conj{y_j}$ and hence \begin{align*} \lambda_i
\conj{\lambda_j} y_i \conj{y_j} &= y_i\conj{y_j}.  \end{align*} If $y_i, y_j
\neq 0$, it follows that $\lambda_i \conj{\lambda_j} = 1$ and that $\lambda_i =
\lambda_j.$ Thus for any indices $i,j$ with $y_i,y_j \neq 0$ we have $x_i =
\lambda y_i$ and $x_j = \lambda y_j$ for some $\lambda \in \T.$ For any index
$k$ with $y_k = 0$ we have that $x_k = 0$ and trivially that $x_k = \lambda
y_k.$ Therefore, $x_k = \lambda y_k$ for all $k \in [M]$ where $\lambda \in \T,$
implying that $x \sim y .$ The converse holds by a direct computation.
\end{proof}

\medskip

\begin{proof}[Proof of Theorem \ref{thm:equiv} (2)] To prove part (2) of
Theorem~\ref{thm:equiv}, we first note that $x\overset{\mathrm{conj}}{\sim} y$
if and only if \begin{equation}\label{eq3.1} xx^* = yy^* \ (x \sim y) \
\mbox{or} \ xx^* = \overline{y}\overline{y}^* \ (x \sim \conj{y}).
\end{equation} If $x,y \in \C^M$ with $xx^* = yy^*$ we trivially have $\Re(xx^*)
= \Re(yy^*).$ Likewise, $xx^* =  \overline{y}\overline{y}^*$ implies $\Re(xx^*)
+ i \Im(xx^*) = \Re(\conj{y}\conj{y}^*) + i\Im(\conj{y}\conj{y}^*).$ But note
that $\overline{y_i}y_j = \overline{y_i\overline{y_j}}$, which implies that
$\Re(\conj{y}\conj{y}^*) = \Re(yy^*)$. We therefore conclude that $\Re(xx^*) =
\Re(yy^*).$

\medskip

 We now prove the converse. Suppose that $\Re(xx^*) = \Re(yy^*)$. Then we have
 $\Re(x_i \conj{x_j}) = \Re(y_i \conj{y_j})$ for all $i,j \in [M]$. Using
 (\ref{eq3.1}), we must show that if we write  $x = (x_1 \ \cdots \ x_M)^T , y =
 (y_1 \ \cdots \  y_M)^T \in \C^M,$ we have \begin{equation}\label{eq3.2}
 \left(x_i \conj{x_j} = y_i \conj{y_j} \ \mbox{for all} \ i,j \in [M]\right) \
 \mbox{or} \  \left(x_i \conj{x_j} = \conj{y_i} y_j  \ \mbox{for all} \ i,j \in
 [M]\right).  \end{equation}

\medskip

We first claim the following weaker statement:

\medskip

{\it Claim 1:}  Given any $i,j\in [M]$,   $x_i\conj{x_j} = y_i\conj{y_j}$ or $x_i
\conj{x_j} = \conj{y_i} y_j$ holds.

\medskip

\noindent To see this, we  first note that  by putting $i=j$ in the assumption.
\begin{equation}\label{eq3.2+}
|x_i|^2 = \Re(x_i \conj{x_i}) = \Re(y_i
\conj{y_i}) = |y_i|^2
\end{equation}
Thus, $|x_i
\conj{x_j}|^2 = |y_i \conj{y_j}|^2$. With $\Re(x_i \conj{x_j}) = \Re( y_i
\conj{y_j})$, we find that 
\begin{align*} 
\Re(x_i \conj{x_j})^2 + \Im(x_i
\conj{x_j})^2 &= \Re(y_i \conj{y_j})^2 + \Im(y_i \conj{y_j})^2 \\ \Im(x_i
\conj{x_j}) &= \pm \Im(y_i \conj{y_j}) 
\end{align*} 
given any $i,j \in [M].$ Hence,  we have $\Re(x_i \conj{x_j}) = \Re( y_i \conj{y_j})$ and $\Im(x_i
\conj{x_j}) = \pm \Im( y_i \conj{y_j}).$  Therefore, $x_i\conj{x_j} = y_i
\conj{y_j}$ or $x_i\conj{x_j} = \conj{ y_i \conj{y_j}} = \conj{y_i}{y_j}$ and
the claim is justified.

\medskip 

We now prove by induction on $M$ that (\ref{eq3.2}) holds.  We first notice
that we only need to check (\ref{eq3.2}) for $i\ne j$ (since $i=j$ follows from (\ref{eq3.2+})). If $M = 2$, we have only
one pair of $(i,j)$, namely $(i,j) = (1,2)$. Therefore, the statement is true
trivially. 

\medskip

When $M=3$, we may assume without loss of generality that none of the $x_i$ are zero. Otherwise, there is only one pair and the equations for other pairs holds trivially as they are all zero. Now, we have three pairs for $(i,j) = (1,2),(2,3), (1,3)$. Using {\it Claim 1} and the  pigeonhole principle, one of the two possibilities in {\it Claim 1} must hold twice. Without loss of generality, assume we have
$$
x_1 \conj{x_2} = y_1 \conj{y_2} \ \mbox{and} \ x_2 \conj{x_3} = y_2 \conj{y_3}
$$
Multiplying them together gives
$$
x_1 |x_2|^2 \overline{x_{3}} = y_1|y_2|^2 \conj{y_3}.
$$
By (\ref{eq3.2+}) and $|x_2|\ne 0$, we can cancel out the moduli of $x_2$ and $y_2$ and conclude that $x_1 \overline{x_{3}} = y_1 \conj{y_3}.$ Hence, $xx^{\ast} = yy^{\ast}$. If the other possibility holds twice, using the same argument, we will have $xx^{\ast} = \overline{y}\overline{y}^{\ast}$.

\medskip

For $M\ge 4$, we use induction. Suppose that the claim holds for dimension $M-1.$ Let $x = (x_1 \ \cdots \
x_M)^T$ and $y = (y_1 \ \cdots \ y_M)^T$ be vectors in $\C^M$ where
$\Re(x_i\conj{x_j}) = \Re(y_i \conj{y_j})$ for each $i,j \in [M].$ Consider the
vectors $(x_1 \ \cdots \ x_{M-1})^T$ and $ (y_1 \ \cdots \ y_{M-1})^T$ in $\C^{M
- 1}.$ Suppose that $x_1\conj{x_2} = y_1\conj{y_2}$ holds. Then by the inductive
hypothesis, $x_i \conj{x_j} = y_i \conj{y_j}$ for all $i,j \in [M-1].$
Similarly, considering the vectors $(x_2 \ \cdots \ x_{M})^T$ and $ (y_2 \
\cdots \ y_{M})^T$ in $\C^{M-1}$, we conclude by the induction hypothesis that
$x_i \conj{x_j} = y_i \conj{y_j}$ for all $i,j \in \{2,\ldots, M \}.$ Hence,
combining the conditions on $( x_1 \ \cdots \ x_{M-1})^T$ and $( x_2 \ \cdots \
x_{M})^T$ we have $x_i \conj{x_j} = y_i \conj{y_j}$ for all $i,j \in [M],$ except $i=1$ and $j=M$.

\medskip

We now prove that $x_1\overline{x_M} = y_1\overline{y_M}$. Note that if all $x_2,...,x_{M-1}$ are zero, we essentially have only one choice $(i,j) = (1,M)$ and the equations for other pairs holds trivially as they are all zero. Therefore, (\ref{eq3.2}) holds trivially. Without loss of generality, we assume that $x_2\ne 0$. Then multiply the equation for the pair $(1,2)$ and $(2,M)$ and argue in the same way as before in $M=3$, we conclude that $x_1 \overline{x_{M}} = y_1 \conj{y_M}$ also holds. Equivalently we have that $xx^* = yy^*.$

\medskip

Similarly, if we assume instead that
$x_1 \conj{x_2} = \conj{y_1} y_2$, we conclude that $x_i \conj{x_j} = \conj{y_i}
y_j$ for all $i,j \in [M],$ in other words that $xx^* = \conj{y}\conj{y}^*.$
This completes the proof of (\ref{eq3.2}) and hence the whole proof of  Theorem
\ref{thm:equiv}.  \end{proof}

For $Q \in \H_{\F}^{M \times  M}$ with $\F = \C $ or $\R$,  we define the
vectorization of $Q$ by \begin{equation}\label{eq3.5} {\bf v}(Q) = ( q_{11} \
q_{22} \ \cdots \ q_{MM} \ q_{12} \ \cdots q_{1M} \ \cdots \ q_{(M-1)M})^T \in
\C^{\frac{M(M+1)}{2}} \end{equation} which is a vector with $M$ coordinates from
the diagonal of $Q$ and subsequent coordinates given from the remaining row
entries above the diagonal of $Q$ given from row $1$ to row $M$.  For $\varphi
\in \R^M,$ we define $\omega_{\varphi} \in \R^{\frac{M(M+1)}{2}}$ by
$$\omega_{\varphi} =(\varphi_1 ^2 \ \cdots  \ \varphi_M^2 \  2\varphi_1\varphi_2
\ \cdots \ 2 \varphi_1\varphi_{M}  \ \cdots  \ 2\varphi_{M-1}\varphi_{M})^T.  $$
The following lemma expresses two different important identities for the
magnitudes of frame coefficients:

\begin{lemma}\label{lemma:identity} Let $\varphi\in{\mathbb R}^M$ and let
$x\in{\mathbb C}^M$. Then \begin{eqnarray} |\left<x,\varphi\right>|^2 &=&
\varphi^T (\Re(xx^\ast))\varphi\label{eq3.3}\\ &= &\left<\omega_{\varphi}^T,
{\bf v}(\Re(xx^*)) \label{eq3.3+}\right>.  \end{eqnarray} \end{lemma}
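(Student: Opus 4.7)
The plan is to prove the two equalities separately, each by a direct algebraic manipulation. Both are short once one notices the right symmetry property.

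For the first equality $|\langle x,\varphi\rangle|^2 = \varphi^T(\operatorname{Re}(xx^*))\varphi$, I would start from the standard identity $|\langle x,\varphi\rangle|^2 = \varphi^*(xx^*)\varphi$. Since $\varphi \in \mathbb{R}^M$ we have $\varphi^* = \varphi^T$, hence $|\langle x,\varphi\rangle|^2 = \varphi^T(xx^*)\varphi$. Now write $xx^* = \operatorname{Re}(xx^*) + i\operatorname{Im}(xx^*)$. Because $xx^*$ is Hermitian, $\operatorname{Re}(xx^*)$ is a real symmetric matrix while $\operatorname{Im}(xx^*)$ is a real skew-symmetric matrix. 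The key observation is that a real quadratic form vanishes on any skew-symmetric matrix: $\varphi^T S \varphi = (\varphi^T S \varphi)^T = \varphi^T S^T \varphi = -\varphi^T S \varphi$, forcing it to be zero. Applying this to $S = \operatorname{Im}(xx^*)$ eliminates the imaginary contribution and yields $|\langle x,\varphi\rangle|^2 = \varphi^T\operatorname{Re}(xx^*)\varphi$.

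For the second equality, I would simply expand the quadratic form entry-wise. For any real symmetric matrix $H = [h_{ij}] \in \mathbb{H}_{\mathbb{R}}^{M\times M}$ and any $\varphi = (\varphi_1,\dots,\varphi_M)^T \in \mathbb{R}^M$,
\begin{equation*}
\varphi^T H \varphi \;=\; \sum_{i=1}^{M} h_{ii}\varphi_i^2 \;+\; 2\sum_{1\le i<j\le M} h_{ij}\varphi_i\varphi_j.
\end{equation*}
Comparing this with the definitions of $\mathbf{v}(H)$ in (\ref{eq3.5}) and of $\omega_\varphi$, the right-hand side is exactly the Euclidean inner product $\langle \omega_\varphi, \mathbf{v}(H)\rangle$. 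Specializing $H = \operatorname{Re}(xx^*)$ (which is symmetric, as noted above) and combining with the first identity completes the proof.

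There is no real obstacle here; the content is entirely mechanical. The only mild subtlety is recognizing at the start that the Hermitian structure of $xx^*$ forces $\operatorname{Im}(xx^*)$ to be skew-symmetric, which is precisely what makes the replacement of $xx^*$ by $\operatorname{Re}(xx^*)$ legitimate inside the real quadratic form. After that, the vectorization identity is just bookkeeping matching the diagonal and above-diagonal entries of $H$ against the coordinates of $\omega_\varphi$, with the factor $2$ in $\omega_\varphi$ accounting for the symmetric pair $h_{ij} = h_{ji}$.
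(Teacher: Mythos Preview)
Your proposal is correct and follows essentially the same approach as the paper: decompose $xx^*$ into real and imaginary parts inside the quadratic form for the first identity, then expand $\varphi^T H\varphi$ entry-wise and match against the definitions of $\mathbf v(H)$ and $\omega_\varphi$ for the second. The only minor difference is in how you dispose of the imaginary contribution: the paper simply observes that $|\langle x,\varphi\rangle|^2$ is real, forcing $\varphi^T\operatorname{Im}(xx^*)\varphi=0$, whereas you invoke the skew-symmetry of $\operatorname{Im}(xx^*)$ directly; both are immediate and amount to the same thing.
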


\begin{proof} We note that $$ |\left<x,\varphi\right>|^2 = \varphi^T
(xx^\ast)\varphi = \varphi^T (\Re(xx^\ast))\varphi +i \varphi^T
(\Im(xx^\ast))\varphi.  $$ But the left hand side is real-valued and $\varphi^T$
is real-valued, which proves the first equality.  We have (\ref{eq3.3}) proved.
To prove (\ref{eq3.3+}), we let $Q = \Re(xx^{\ast}) = [q_{ij}]_{1\le i,j\le M}$,
then $$ \varphi^T Q\varphi = \sum_{i=1}^Mq_{ii}^2\varphi_{ii}^2+\sum_{i< j}
2q_{ij}\varphi_i\varphi_j = \left< \omega_{\varphi}, {\bf v}(Q) \right>.  $$
\end{proof}

We now turn to prove Theorem \ref{thm:S}.

%
%
%

\medskip

The following lemma concerns the rank of the real part of rank 1 complex
matrices.

\begin{lemma}\label{lemma:rank2} For any $x,y\in \C^M$, $$ {\rm
rank}(\Re(xx^{\ast}))\le 2,  \  \mbox{\rm and} \ {\rm
rank}(\Re(xx^{\ast}-yy^{\ast}))\le 4.$$ \end{lemma}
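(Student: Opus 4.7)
The plan is to decompose each complex vector into its real and imaginary parts and compute $\Re(xx^{\ast})$ explicitly in terms of real outer products. Write $x = a + ib$ with $a, b \in \R^M$. A direct calculation gives
\[
xx^{\ast} = (a+ib)(a^T - ib^T) = (aa^T + bb^T) + i(ba^T - ab^T),
\]
so that $\Re(xx^{\ast}) = aa^T + bb^T$. Each of $aa^T$ and $bb^T$ is a rank-1 real matrix, so by Lemma \ref{lemma:rank} we conclude $\mathrm{rank}(\Re(xx^{\ast})) \leq 2$.

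For the second inequality, I would apply the same decomposition to $y = c + id$ with $c, d \in \R^M$, giving $\Re(yy^{\ast}) = cc^T + dd^T$. Then
\[
\Re(xx^{\ast} - yy^{\ast}) = aa^T + bb^T - cc^T - dd^T,
\]
which is a sum of four real rank-1 matrices. Applying Lemma \ref{lemma:rank} repeatedly (or once to the grouping) yields $\mathrm{rank}(\Re(xx^{\ast} - yy^{\ast})) \leq 4$.

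There is no real obstacle here: the only content is the identity $\Re((a+ib)(a-ib)^T) = aa^T + bb^T$, after which the rank bound is immediate from Lemma \ref{lemma:rank}. This is precisely the computation that justifies the containment $\Re(\mathcal{S}^{1,1}_{\C}) \subseteq \mathcal{S}^4_{\R}$ used in the passage from part (1) to part (2) of Theorem \ref{thm:S}.
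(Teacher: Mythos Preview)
Your proof is correct. Both you and the paper write $\Re(xx^{\ast})$ as a sum of two rank-one matrices and then invoke Lemma~\ref{lemma:rank}, but the decompositions differ: the paper uses the identity $\Re(xx^{\ast}) = \tfrac{1}{2}xx^{\ast} + \tfrac{1}{2}\conj{x}\,\conj{x}^{\ast}$ (a sum of two complex rank-one outer products), whereas you split $x = a + ib$ and obtain $\Re(xx^{\ast}) = aa^T + bb^T$ (a sum of two real rank-one outer products). Your version has the minor advantage of producing an explicit real decomposition, which makes the containment $\Re(\mathcal{S}^{1,1}_{\C}) \subseteq \mathcal{S}^4_{\R}$ more transparent; the paper's version is a touch slicker in that it requires no coordinate computation. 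Either way the argument is immediate once the decomposition is in hand.
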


\medskip

\begin{proof} Notice that $\Re(xx^{\ast}) =
\frac{xx^\ast+\conj{x}\conj{x}^{\ast}}{2} =
\frac{xx^\ast}{2}+\frac{\conj{x}\conj{x}^{\ast}}{2}$.
Since $ \mbox{rank}(A+B)\le \mbox{rank}(A)+\mbox{rank}(B)$ for any $A,B \in
\C^{M \times N}$ we can say that $${\rm rank}(\Re(xx^{\ast})) \le {\rm
rank}\left(\frac{xx^\ast}{2}\right)+{\rm
rank}\left(\frac{\conj{x}\conj{x}^{\ast}}{2}\right)\le 2.  $$ Similarly, $$ {\rm rank}(\Re(xx^{\ast}-yy^{\ast})) \le {\rm
rank}(\Re(xx^{\ast}))+{\rm rank}(-\Re(yy^{\ast})) \le 2+2 = 4.  $$ \end{proof}
\medskip

\begin{proof}[Proof of Theorem \ref{thm:S}] We first prove (1). Suppose that
$\Phi$ is conjugate phase retrievable. Take $Q\in \ker ({\mathcal A})\cap \Re({\mathcal
S}^{1,1}_{\mathbb C})$.  Since ${\mathcal S}^{1,1}_{\mathbb C} = {\mathcal S}^{1,0}_{\mathbb C}-{\mathcal
S}^{1,0}_{\mathbb C}$, we can say that there exist $x,y \in \C^M$ such that $Q
= \Re(xx^{\ast} - yy^{\ast}).$ Now, for all $n = 1,\ldots,N$, by Lemma \ref{lemma:rank2}
(1), \begin{equation}\label{eq3.4} 0=\varphi_n^TQ\varphi_n =
\varphi_n^T\Re(xx^{\ast})\varphi_n -\varphi_n^T\Re(yy^{\ast})\varphi_n  =
|\left<x,\varphi_n\right>|^2-|\left<y,\varphi_n\right>|^2.  \end{equation} Thus,
by conjugate phase retrievability of $\Phi$, we have
$x\overset{\mathrm{conj}}{\sim} y$. By Theorem \ref{thm:equiv} (2),
$\Re(xx^*)=\Re(yy^*)$, which shows that $Q = O$, the zero matrix.

\medskip

Conversely, suppose that
$|\left<x,\varphi_n\right>|^2=|\left<y,\varphi_n\right>|^2$ for all
$n=1,\ldots,N$.  Then, with the same computation in (\ref{eq3.4}) we have $Q =
\Re(xx^{\ast}-yy^{\ast})\in \ker({\mathcal A})$ and also $Q\in \Re({\mathcal
S}^{1,1}_{\mathbb C})$. By our assumption, $Q = O$. Thus
$\Re(xx^{\ast})=\Re(yy^{\ast})$, which means  $x\overset{\mathrm{conj}}{\sim} y$
by Theorem \ref{thm:equiv} (2).

\medskip

For (2), we just notice that from Lemma \ref{lemma:rank2} (2), any $Q\in
\Re({\mathcal S}^{1,1}_{\mathbb C})$ must have rank at most 4. Thus,
$\Re({\mathcal S}^{1,1}_{\mathbb C})$ is a subset of ${\mathcal S}^4_{\mathbb
R}$. If $\ker({\mathcal A})\cap {\mathcal S}^4_{\mathbb R}=\{ 0 \}$, then
$\ker(A)\cap \Re({\mathcal S}^{1,1}_{\mathbb C})=\{ 0 \}$ and (2) then
follows from (1).  \end{proof}

\medskip

\section{Conjugate Phase Retrieval on $\C^2$ and $\C^3$}\label{sec:C2C3}

In this section, we will give a complete study of conjugate phase retrieval by
real frames on $\C^2$ and $\C^3$.  Given a real valued frame $\Phi = \{
\varphi_n \}_{n = 1}^{N}$ in $\C^M$ we define the $N \times \frac{M(M+1)}{2}$
matrix $$ \Omega_{\Phi} =\left[ \begin{array}{ccc} -& \omega_{\varphi_1}^T & -
\\ & \vdots &  \\ -& \omega_{\varphi_N}^T & - \\ \end{array} \right] $$ where
the $n$-th row is the vector $\omega_{\varphi_n}^T.$ Notice that if $M=2$ or $3$
and $N = M(M+1)/2$, then the respective $\Omega_{\Phi}$ are exactly the matrices
given in (\ref{eq:det2}) and (\ref{eq:det3}) in Theorem \ref{thm:th1}. The
following proposition gives a strong sufficient condition for conjugate phase
retrieval:

\begin{proposition}\label{thm:det} Let $\Phi = \{ \varphi_n \}_{n = 1}^{N}$ be a
frame taken from $\R^M.$ If $\ker (\Omega_{\Phi}) = \{0\}$, then $\Phi$ is
conjugate phase retrievable. In particular, if $N = M(M+1)/2$ and
$\det(\Omega_{\Phi}) \ne 0,$ then $\Phi$ is conjugate phase retrievable.
\end{proposition}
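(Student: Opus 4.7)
The plan is to reduce the condition $\ker(\Omega_\Phi)=\{0\}$ to the criterion $\ker(\mathcal{A})\cap \Re(\mathcal{S}_{\mathbb C}^{1,1}) = \{O\}$ provided by Theorem \ref{thm:S}(1). The bridge is the identity (\ref{eq3.3+}) of Lemma \ref{lemma:identity}: for any real-valued $\varphi$ and any $Q \in \H_{\R}^{M\times M}$ we have $\varphi^T Q \varphi = \langle \omega_\varphi, {\bf v}(Q)\rangle$. Applied to each $\varphi_n$, this shows that the phase-lift map factors as $\mathcal{A}(Q) = \Omega_\Phi\, {\bf v}(Q)$ on the real vector space $\H_{\R}^{M\times M}$, where ${\bf v}$ is the vectorization defined in (\ref{eq3.5}).

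From this factorization, the first step is immediate: if $\ker(\Omega_\Phi) = \{0\}$ (as a linear map on $\R^{M(M+1)/2}$), then $\mathcal{A}(Q)=0$ forces ${\bf v}(Q)=0$, hence $Q = O$. Thus $\ker(\mathcal{A}) = \{O\}$, which trivially gives $\ker(\mathcal{A}) \cap \Re(\mathcal{S}_{\mathbb C}^{1,1}) = \{O\}$. Invoking Theorem \ref{thm:S}(1) then yields conjugate phase retrievability.

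For the ``in particular'' clause, when $N = M(M+1)/2$ the matrix $\Omega_\Phi$ is square of size $M(M+1)/2$, so injectivity of the induced map is equivalent to $\det(\Omega_\Phi)\ne 0$, giving the determinantal sufficient condition. There is no real obstacle here: once the factorization $\mathcal{A} = \Omega_\Phi \circ {\bf v}$ is recorded, the proposition follows in one line from Theorem \ref{thm:S}(1). The only minor bookkeeping is to confirm that ${\bf v}: \H_{\R}^{M\times M} \to \R^{M(M+1)/2}$ is a linear isomorphism (immediate from the definition in (\ref{eq3.5})), so that triviality of $\ker(\Omega_\Phi)$ indeed transfers to triviality of $\ker(\mathcal{A})$.
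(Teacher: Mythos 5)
Your proposal is correct and follows essentially the same route as the paper: both rest on the identity $\varphi_n^T Q\varphi_n = \langle \omega_{\varphi_n}, {\bf v}(Q)\rangle$ from Lemma \ref{lemma:identity}, deduce $Q=O$ from $\ker(\Omega_\Phi)=\{0\}$, and conclude via Theorem \ref{thm:equiv}(2) (which you invoke indirectly through Theorem \ref{thm:S}(1), while the paper applies it directly to $Q=\Re(xx^\ast-yy^\ast)$). The only cosmetic difference is that you record the slightly stronger intermediate fact $\ker(\mathcal{A})=\{O\}$ before intersecting with $\Re(\mathcal{S}^{1,1}_{\mathbb C})$.
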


\begin{proof} Let $x,y\in\C^M$ be such that $|\left<x,\varphi_n\right>|^2 =
|\left<y,\varphi_n\right>|^2$. Let also $Q = \Re(xx^\ast-yy^{\ast})$ Then using
(\ref{eq3.3+}) in Lemma \ref{lemma:identity}, we obtain $$
0=|\left<x,\varphi_n\right>|^2 - |\left<y,\varphi_n\right>|^2 =
\langle\omega_{\varphi_n},{\bf v}(Q)\rangle $$ for all $n=1,\ldots,N$. Putting
all the equations together, we have a system of linear equations:
$\Omega_{\Phi}({\bf v}(Q)) = 0.$ If $\ker \Omega_{\Phi} = \{0\}$, we must have
${\bf v}(Q) = 0$. This is equivalent to $Q = O$ and hence $\Re(xx^{\ast}) =
\Re(yy^{\ast})$. By Theorem \ref{thm:equiv} (2), $x\overset{\mathrm{conj}}{\sim}
y$. Thus $\Phi$ is conjugate phase retrievable. If $N= M(M+1)/2$, then $\ker
(\Omega_{\Phi})= \{0\}$ if and only if $\det(\Omega_{\Phi}) \ne 0$, so the
second statement follows.  \end{proof}

This theorem tells us that a generic frame with $M(M+1)/2$ vectors on $\R^M$ is
conjugate phase retrievable. However, such conditions on $N$ and the determinant
in the previous proposition is far from necessary. We will see the number
of vectors required for a generic frame to be conjugate phase retrievable is of order $4M$
in the next section. Nonetheless, this proposition is accurate when $M=2$ and
$3$, which is what we are going to prove now.

\begin{proof}[Proof of Theorem~\ref{thm:th1} when $M=2$.] We first prove the
statement (1) in Theorem \ref{thm:th1}. For the sufficiency, we note that it has
been proved in Proposition \ref{thm:det}.

\medskip

We now prove the necessity. We note that for $\Phi = \begin{bmatrix} a_1 & b_1 &
c_1 \\ a_2 & b_2 & c_2 \end{bmatrix}$, $$ \det\Omega_{\Phi} = \det\left[
\begin{smallmatrix} a_1^2 & 2a_1a_2 & a_2^2 \\ b_1^2 & 2b_1b_2 & b_2^2 \\ c_1^2
& 2c_1c_2 & c_2^2 \end{smallmatrix} \right] =-2(a_1b_2 - a_2b_1)(a_1c_2 -
a_2c_1)(b_1c_2 - b_2c_1) $$ (after a computation by Mathematica). Suppose  that
$\Phi$ is conjugate phase retrievable. Then $\Phi$ possesses the complement
property by Proposition \ref{thm:CP}, which means that  any two vectors from
$\Phi$ are linearly independent. In particular, this implies that none of the
factors $(a_1b_2 - a_2b_1)$, $(a_1c_2 - a_2c_1)$, $(b_1c_2 - b_2c_1)$ are zero.
Hence, $\det\Omega_{\Phi}\ne 0$. As $\det\Omega_{\Phi} = 0$ is an algebraic
equation, it defines an algebraic variety. Note that the complement of this
algebraic variety clearly cannot be empty. Thus, generic frames of three real
vectors is conjugate phase retrievable. This also shows that $N^{\ast}(2) \le
3.$

\medskip

We now show that no $\Phi$ with two vectors is conjugate phase retrievable.
This shows that $N_{\ast}(2) = N^{\ast}(2) = 3$. Indeed, if $\Phi$ has only two
vectors, then it is obvious that $\Phi$ cannot have the complement property on
$\C^2$ (by taking index subsets $I$,$I^c$ having only one element). Hence,
Proposition \ref{thm:CP} tells us that $\Phi$ cannot be conjugate phase
retrievable. This finishes the proof.  \end{proof}

From the proof, we also notice that  $\det \Psi \neq 0$ if and only if $\Phi$
has the complement property, which gives the following simple characterization
of conjugate phase retrievable real-valued frames in $\C^2$:

\begin{theorem}\label{thm:C2CP} A real-valued frame $\Phi \subseteq \R^2$ is
conjugate phase retrievable if and only if $\Phi$ has the complement property.
\end{theorem}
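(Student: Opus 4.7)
The ``only if'' direction is already Proposition~\ref{thm:CP}, so only the converse needs work. The plan is to reduce to the three-vector case covered by Theorem~\ref{thm:th1}(1): I will extract from $\Phi$ a sub-collection of exactly three pairwise linearly independent vectors, show that this sub-collection is already conjugate phase retrievable, and conclude that $\Phi$ inherits the property, since any pair $(x,y)$ with $|\langle x,\varphi_n\rangle|=|\langle y,\varphi_n\rangle|$ for every $\varphi_n\in\Phi$ in particular satisfies the same equalities on the smaller sub-collection.

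The one substantive step is the claim that a real frame $\Phi=\{\varphi_n\}_{n=1}^{N}\subseteq\R^2$ with the complement property in $\C^2$ must contain three pairwise linearly independent vectors. By Lemma~\ref{thm:spans}, spanning $\C^2$ and spanning $\R^2$ are equivalent for collections of real vectors, so the complement property may be tested using real spans. Suppose, toward a contradiction, that every $\varphi_n$ lies on one of at most two lines $\ell_1,\ell_2$ through the origin in $\R^2$. Partition the indices as $I=\{n:\varphi_n\in\ell_1\}$ and $I^c\subseteq\{n:\varphi_n\in\ell_2\}$; both real spans $\spn_{\R}\{\varphi_n:n\in I\}$ and $\spn_{\R}\{\varphi_n:n\in I^c\}$ are at most one-dimensional, so neither equals $\R^2$, contradicting the complement property. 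Hence at least three distinct directions appear among $\varphi_1,\ldots,\varphi_N$, and picking one vector from each of three of them produces pairwise linearly independent $\varphi_i,\varphi_j,\varphi_k$.

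Set $\Phi_0=\{\varphi_i,\varphi_j,\varphi_k\}$. The factorization $\det\Omega_{\Phi_0}=-2(a_1b_2-a_2b_1)(a_1c_2-a_2c_1)(b_1c_2-b_2c_1)$ recorded in the proof of Theorem~\ref{thm:th1}(1) is nonzero, since each of its three factors is precisely the $2\times2$ determinant witnessing linear independence of one of the pairs. Proposition~\ref{thm:det} then forces $\Phi_0$ to be conjugate phase retrievable, and therefore so is $\Phi$. The only point requiring any thought is the directional pigeonhole argument of the second paragraph; the remaining steps are direct appeals to Theorem~\ref{thm:th1}(1), Proposition~\ref{thm:det}, and Lemma~\ref{thm:spans}.
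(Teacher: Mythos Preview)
Your proof is correct and follows essentially the same route as the paper: the ``only if'' direction is Proposition~\ref{thm:CP}, and the ``if'' direction goes through the factorization $\det\Omega_{\Phi_0}=-2(a_1b_2-a_2b_1)(a_1c_2-a_2c_1)(b_1c_2-b_2c_1)$, observing that pairwise linear independence makes each factor nonzero. The paper derives Theorem~\ref{thm:C2CP} as an immediate remark following the proof of Theorem~\ref{thm:th1}(1), which treats only the three-vector case; your argument is more complete in that it explicitly handles frames with $N>3$ by extracting a three-vector sub-collection with pairwise independent vectors, a step the paper leaves implicit.
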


\medskip

 The proof for $\C^2$ is based on the complement property. However, the
 determinant in (\ref{eq:det3}) becomes impossible to factorize. In fact, we
 will find that the simple characterization by the complement property in Theorem
 \ref{thm:C2CP} cannot hold on $\C^3$ or higher.

\medskip

In the following, we will turn to studying the case $\C^3$ and prove Theorem~\ref{thm:th1}
for $M=3$ without using the complement property.   Our idea is to
first study the set $\Re({\mathcal S}^{1,1}_{\R})$ for $M=3$ and show that it
will take all possibilities of symmetric matrices whose quadratic form is
non-empty as a real algebraic variety. Then it will imply that a non-zero
element in $\ker (\Omega_{\Phi})$ will correspond to some non-conjugate equivalent
vectors. This idea is also workable for $M=2$ and interested readers are invited
to complete the same proof  for $M=2$.

\begin{lemma}\label{thm:orthW} Let $W_{x,y} = \Re(xx^{\ast}-yy^{\ast})$ and let
$Q$ be any $M\times M$ matrix with real entries. Then $$W_{Qx,Qy} = Q
W_{x,y}Q^T.$$ \end{lemma}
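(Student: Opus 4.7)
The plan is to prove the identity by a direct matrix computation, unpacking the definition of $W_{x,y}$ and then using that $Q$ has real entries in two separate ways.

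First I would start from the right-hand side of the definition: $W_{Qu,Qv} = \Re\bigl((Qu)(Qu)^{\ast} - (Qv)(Qv)^{\ast}\bigr)$. Applying the standard identity $(AB)^{\ast} = B^{\ast}A^{\ast}$ to each outer product gives $(Qu)(Qu)^{\ast} = Q\,uu^{\ast}\,Q^{\ast}$ and likewise for $v$. Since $Q$ is real by hypothesis, $Q^{\ast} = Q^{T}$, so after factoring on the left and right I obtain
\[
W_{Qu,Qv} = \Re\bigl(Q(uu^{\ast} - vv^{\ast})Q^{T}\bigr).
\]

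The only remaining point is that $\Re(\cdot)$ commutes with conjugation by a real matrix. This follows because if $A$ has real entries and $B$ is any complex matrix, then writing $B = \Re(B) + i\,\Im(B)$ and using linearity gives $AB = A\,\Re(B) + i\,A\,\Im(B)$, where both $A\,\Re(B)$ and $A\,\Im(B)$ are real; hence $\Re(AB) = A\,\Re(B)$. The same argument on the right yields $\Re(AB\,C) = A\,\Re(B)\,C$ whenever $A$ and $C$ are real. Applying this with $A = Q$, $C = Q^{T}$, and $B = uu^{\ast} - vv^{\ast}$ gives
\[
W_{Qu,Qv} = Q\,\Re(uu^{\ast} - vv^{\ast})\,Q^{T} = Q\,W_{u,v}\,Q^{T},
\]
which is exactly the claimed identity.

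There is no genuine obstacle here: the lemma is a bookkeeping fact that records how the ``real-part phase-lift'' transforms under a real linear change of variables. The only subtlety worth flagging explicitly is the real-entries hypothesis on $Q$, which is used twice, once to replace $Q^{\ast}$ by $Q^{T}$ and once to pull $Q$ and $Q^{T}$ outside of $\Re(\cdot)$. The identity would fail for general complex $Q$, and its usefulness later is that it lets us conjugate any element of $\Re(\mathcal{S}^{1,1}_{\mathbb{C}})$ by real matrices while staying inside the same set, which is what makes it natural for the upcoming $\mathbb{C}^{3}$ analysis.
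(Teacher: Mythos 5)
Your proof is correct and follows essentially the same route as the paper: expand the outer products, use $Q^{\ast}=Q^{T}$ for real $Q$, and then pull $Q$ and $Q^{T}$ out of $\Re(\cdot)$ by writing the inner matrix as $\Re(B)+i\,\Im(B)$. No issues to report.
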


\begin{proof} First, note that for any $M \times M$ matrix $B$ with complex
entries. Writing $B= \Re(B)+i\Im(B)$, we have \begin{equation}\label{eq4.1}
\begin{aligned} \Re(Q(B)Q^T) &= \Re(Q\Re (B) Q^T + iQ \Im (B) Q^T) \\ &= Q\Re
(B) Q^T \end{aligned} \end{equation} since $Q$ has real entries. Thus, with $B =
xx^* - yy^*$, \begin{align*} W_{Qx,Qy} &= \Re( (Qx)(Qx)^* - (Qy)(Qy)^* ) \\ &=
\Re(Qxx^*Q^T - Qyy^*Q^T) \\ &= \Re(Q(xx^* - yy^*)Q^T) \\ &= Q\Re(xx^* - yy^*)Q^T
\ (by \ (\ref{eq4.1}))\\ &= Q W_{x,y} Q^T.  \end{align*} \end{proof}

\begin{proposition}\label{thm:prop4.1} For any $H\in \H_{\R}^{3\times 3}$ that
is not positive semidefinite or negative semidefinite, there exists $x,y\in \C^3$ such
that $H = \Re(xx^{\ast}-yy^{\ast})$.  \end{proposition}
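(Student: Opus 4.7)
The plan is to reduce to the diagonal case via the spectral theorem and the covariance property already proved in Lemma \ref{thm:orthW}, and then to construct the witnesses $x,y$ explicitly by splitting coordinates according to eigenvalue sign. Since $H \in \H_{\R}^{3\times 3}$ is real symmetric, the spectral theorem gives a real orthogonal matrix $Q$ with $H = QDQ^T$, where $D = \mathrm{diag}(\lambda_1,\lambda_2,\lambda_3)$. By Lemma \ref{thm:orthW}, if I can find $u,v \in \C^3$ with $W_{u,v} = D$, then $x := Qu$ and $y := Qv$ satisfy $W_{x,y} = QDQ^T = H$. Thus the entire problem reduces to realizing the diagonal matrix $D$.

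The hypothesis that $H$ is neither non-negative nor non-positive definite translates into $D$ having at least one strictly positive and at least one strictly negative diagonal entry. So the signature $(p,q)$ of $D$ must be one of $(1,1),(1,2),(2,1)$; in particular, the sets $P := \{j : \lambda_j > 0\}$ and $N := \{j : \lambda_j < 0\}$ are both nonempty and each has cardinality at most $2$.

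The construction is to assign $u$ only to the positive-index coordinates and $v$ only to the negative-index coordinates, using a real/imaginary split to kill the off-diagonal contribution. Concretely, if $P = \{j\}$ set $u_j = \sqrt{\lambda_j}$ and the remaining entries of $u$ zero; if $P = \{j_1,j_2\}$ set $u_{j_1} = \sqrt{\lambda_{j_1}}$ and $u_{j_2} = i\sqrt{\lambda_{j_2}}$, other entries zero. Define $v$ analogously from $N$ with $-\lambda_j$ in place of $\lambda_j$. A direct check shows $\Re(uu^*)$ is diagonal with entries $\lambda_j \chi_{P}(j)$: the diagonal entries are correct since $|u_j|^2 = \lambda_j$ for $j\in P$, and the only nontrivial off-diagonal entry appears in the case $|P|=2$, where it equals $\Re(u_{j_1}\overline{u_{j_2}}) = \Re(-i\sqrt{\lambda_{j_1}\lambda_{j_2}}) = 0$. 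Similarly $\Re(vv^*) = \mathrm{diag}(-\lambda_j\chi_N(j))$, and therefore $\Re(uu^* - vv^*) = D$.

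The only potential obstacle is the cancellation of off-diagonal terms, but because $M=3$ and at least one coordinate in each of $\{u,v\}$ must be zero (the index where $\lambda_j$ has the opposite sign or is zero), there is at most one off-diagonal pair in each of $uu^*$ and $vv^*$ to manage, and the one-real/one-imaginary trick handles it at once. Combining the diagonal construction with the reduction via $Q$ in the first paragraph completes the proof. Essentially the same recipe, with all entries real, would also prove the analogous statement for $M=2$ referenced in the text.
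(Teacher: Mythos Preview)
Your proof is correct and genuinely different from the paper's. Both arguments begin with the same reduction: diagonalize $H = QDQ^T$ and invoke Lemma~\ref{thm:orthW} to pass to the diagonal case. From there the paths diverge. The paper writes general vectors $x,y$ in polar form, obtaining a coupled system of six equations in the moduli $|x_j|,|y_j|$ and phases $\theta_j,\psi_j$; it then makes angle choices (e.g.\ $\theta_1-\theta_2=\pi/2$) to collapse several equations and solves the remaining modulus constraints by exploiting surjectivity of $t\mapsto t/\sqrt{k+t^2}$ onto $[0,1)$. Your approach sidesteps all of this by taking $u$ and $v$ with \emph{disjoint supports} on the positive and negative eigenvalue indices respectively, so that the mixed terms $\Re(u_j\overline{v_k})$ never arise; the single nontrivial off-diagonal term within each block is killed by the real/imaginary split. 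This is cleaner and shorter, and it makes transparent exactly why dimension $3$ with mixed signature is so tractable: each of $u,v$ is supported on at most two coordinates, so there is at most one off-diagonal pair per block to control. The paper's route, by contrast, produces a one-parameter family of solutions (with $|y_1|$ free), which is more than the proposition requires but hints at the size of the solution set.
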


\begin{proof} Denote by diag$(\lambda_1,\lambda_2,\lambda_3)$ the diagonal
matrix with diagonal entries $\lambda_1,\lambda_2,\lambda_3$. For any $H$ that
is not positive semidefinite or negative semidefinite, we can find an real orthogonal
matrix $Q$ such that $$ Q^THQ = \mbox{diag}(a,b,-c)  \ \mbox{or} \
\mbox{diag}(a,0,-c) $$ where $a,b,c> 0$. Suppose that we can find $x, y\in \C^3$
such that $W_{x,y} = \Re(xx^{\ast}-yy^{\ast}) = \mbox{diag}(a,b.-c)$. Then $$ H
= Q^T\mbox{diag}(a,b,-c)Q =  Q^TW_{x,y}Q = W_{Q^Tx,Q^Ty} $$ by Lemma
\ref{thm:orthW}. Therefore, it suffices to prove this proposition for diagonal
matrices.

\medskip

Let $x=(x_1,x_2,x_3)^T$ and $y = (y_1,y_2,y_3)^T$ be two vectors in $\C^3$ and
write them in exponential form: $$ x= (|x_1|e^{i\theta_1} \ |x_2|e^{i\theta_2} \
|x_3|e^{i\theta_3})^T, \  y= (|y_1|e^{i\psi_1} \ |y_2|e^{i\psi_2} \
|y_3|e^{i\psi_3})^T.  $$ We are trying to solve for $|x_i|,|y_i|,
\theta_i,\psi_i$, $i=1,2,3$ satisfying $\Re(xx^{\ast}-yy^{\ast}) =
\mbox{diag}(a,b,-c)$, which can be written in the following sets of equations:
$$ \left\{ \begin{array}{ll} |x_1|^2 - |y_1|^2 = a , & \hbox{} \\ |x_2|^2 -
|y_2|^2 = b, & \hbox{} \\ |x_3|^2 - |y_3|^2 = -c & \hbox{} \end{array} \right.,
\ \left\{ \begin{array}{ll} \Re(x_1 \conj{x_2})  =  \Re(y_1 \conj{y_2})   , &
\hbox{} \\ \Re(x_1 \conj{x_3}) =	\Re(y_1 \conj{y_3}), & \hbox{} \\ \Re(x_2
\conj{x_3}) =  \Re(y_2 \conj{y_3}). & \hbox{} \end{array} \right.  $$ These
equations can be rewritten as \begin{equation}\label{eq4.2} \left\{
\begin{array}{ll} |x_1|  = \sqrt{a+ |y_1|^2} , & \hbox{} \\ |x_2|  = \sqrt{b+
|y_2|^2}, & \hbox{} \\ |y_3| = \sqrt{c+ |x_3|^2} & \hbox{} \end{array} \right.,
\ \left\{ \begin{array}{ll} |x_1||x_2| \cos(\theta_1 - \theta_2) = |y_1||y_2|
\cos(\psi_1 - \psi_2)  , & \hbox{} \\ |x_1||x_3| \cos(\theta_1 - \theta_3) =
|y_1||y_3| \cos(\psi_1 - \psi_3) , & \hbox{} \\ |x_2||x_3| \cos(\theta_2 -
\theta_3) = |y_2||y_3| \cos(\psi_2 - \psi_3). & \hbox{} \end{array} \right.
\end{equation} Putting the first set of equations into the second sets, we have
\begin{equation}\label{eq4.2+} \left\{ \begin{array}{ll} \sqrt{a +
|y_1|^2}\sqrt{b+ |y_2|^2} \cos(\theta_1 - \theta_2) = |y_1||y_2| \cos(\psi_1 -
\psi_2)  , & \hbox{} \\ \sqrt{a+ |y_1|^2}|x_3| \cos(\theta_1 - \theta_3) = |y_1|
\sqrt{c+ |x_3|^2} \cos(\psi_1 - \psi_3) , & \hbox{} \\ \sqrt{b+ |y_2|^2}|x_3|
\cos(\theta_2 - \theta_3) = |y_2|\sqrt{c+ |x_3|^2} \cos(\psi_2 - \psi_3). &
\hbox{}\\ \end{array} \right.  \end{equation}

\noindent{\bf Case(i): diag$(a,b,-c)$.} We first set $\theta_1 = \psi_1,
\theta_2 = \psi_2,$ $\theta_3 = \psi_3$ and $\theta_1 - \theta_2 = \psi_1 -
\psi_2 = \frac{\pi}{2}.$ The above equations are satisfied if and only if $$
\left\{ \begin{array}{ll} \sqrt{a+ |y_1|^2}|x_3|  = |y_1| \sqrt{c+ |x_3|^2}  , &
\hbox{} \\ \sqrt{b+ |y_2|^2}|x_3|  = |y_2|\sqrt{c+ |x_3|^2}  ,&\hbox{} \\
\end{array} \right.  $$ which is equivalent to solving $|y_1|,|y_2|,|x_3|$
satisfying \begin{equation}\label{eq4.3} \frac{|y_1|}{\sqrt{a+
|y_1|^2}}=\frac{|y_2|}{\sqrt{b+|y_2|^2}}=\frac{|x_3|}{\sqrt{c+|x_3|^2}}.
\end{equation} We now notice that for any $k>0$, the function $f(x) =
\frac{x}{\sqrt{k+x^2}}$ is a surjective function from $\R$ to $[0,1)$. Indeed,
for any $y\in[0,1)$, we just take $x = \sqrt{ \frac{ky^2}{1 - y^2}}\in \R.$

\medskip

 Hence, we can set $|y_1|$ be free, and we have then $\frac{|y_1|}{\sqrt{a+
 |y_1|^2}}\in [0,1)$. As $\frac{x}{\sqrt{b+x^2}}$ and $\frac{x}{\sqrt{c+x^2}}$
 is surjective, we can always find $|y_2|$ and $|x_3|$ such that (\ref{eq4.3})
 holds. With $|y_1|,||y_2|$ and $|x_3|$ chosen, we take $|x_1|  = \sqrt{a+
 |y_1|^2}$,   $|x_2|  = \sqrt{b+ |y_2|^2}$ and $|y_3| = \sqrt{c+ |x_3|^2}$ with
 $\theta_1 = \psi_1, \theta_2 = \psi_2,$ $\theta_3 = \psi_3$ and $\theta_1 -
 \theta_2 = \psi_1 - \psi_2 = \frac{\pi}{2},$ then (\ref{eq4.2}) holds. Hence,
 we have found $x,y\in\C^3$ such that $\Re(xx^{\ast}-yy^{\ast}) =
 \mbox{diag}(a,b,-c)$.

\medskip

\noindent{\bf Case(ii): diag$(a,0,-c)$.} In this case,  (\ref{eq4.2+}) becomes
$$ \left\{ \begin{array}{ll} \sqrt{a + |y_1|^2} \cos(\theta_1 - \theta_2) =
|y_1| \cos(\psi_1 - \psi_2)  , & \hbox{} \\ \sqrt{a+ |y_1|^2}|x_3| \cos(\theta_1
- \theta_3) = |y_1| \sqrt{c+ |x_3|^2} \cos(\psi_1 - \psi_3) , & \hbox{} \\ |x_3|
\cos(\theta_2 - \theta_3) = \sqrt{c+ |x_3|^2} \cos(\psi_2 - \psi_3). & \hbox{}\\
\end{array} \right.  $$ We take $\theta_i = \psi_i$ for $i=1,2,3$ and $\theta_1
- \theta_2 = \psi_1 - \psi_2 = \pi/2$ and $\theta_2 - \theta_3= \psi_2 - \psi_3
=\pi/2$. Then $\theta_1 - \theta_3 = \psi_1-\psi_3 = \pi$ and we have $$
\sqrt{a+ |y_1|^2}|x_3| = |y_1| \sqrt{c+ |x_3|^2}  \ \ \mbox{or equivalently}  \
\ \frac{|y_1|}{\sqrt{a+|y_1|^2}}= \frac{|x_3|}{\sqrt{c+|x_3|^2}}.  $$ Hence,
taking $|y_1|$ free and surjectivity of the function $\frac{x}{\sqrt{c+x^2}}$
implies that we can find $|x_3|$ satisfying the above equations. Now, taking
also $|x_2| = |y_2|$, equations (\ref{eq4.2}) are satisfied and the proof is
complete.  \end{proof}

This proposition shows that the converse of Proposition \ref{thm:det} is true
when $M=3$.

\begin{theorem}\label{thm:C3} Let $\Phi = \{ \varphi_n \}_{n = 1}^{N}$ be a
real-valued frame over $\C^3$. Then  $\Phi$ is conjugate phase retrievable over
$\C^3$ if and only if $\ker (\Omega_{\Phi}) = \{0\}$.  \end{theorem}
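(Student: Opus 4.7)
The plan is to prove the biconditional. The sufficiency direction is Proposition~\ref{thm:det}, so the work lies entirely in the converse, which I would prove by contrapositive. Assuming $\ker(\Omega_{\Phi})\ne\{0\}$, the aim is to exhibit $x,y\in\C^3$ with $|\langle x,\varphi_n\rangle|=|\langle y,\varphi_n\rangle|$ for every $n$ yet $\Re(xx^\ast)\ne\Re(yy^\ast)$; Theorem~\ref{thm:equiv}(2) then precludes $x\overset{\mathrm{conj}}{\sim}y$, contradicting conjugate phase retrievability.

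Since $\Omega_{\Phi}$ has real entries, a nontrivial kernel contains a nonzero real vector $v\in\R^{6}$, and inverting the vectorization produces a nonzero symmetric matrix $H\in\H^{3\times 3}_{\R}$ with $\mathbf{v}(H)=v$. By identity (\ref{eq3.3+}) of Lemma~\ref{lemma:identity}, the condition $\Omega_{\Phi}v=0$ reads $\varphi_n^T H\varphi_n=0$ for every $n$. The key observation is that this nonzero $H$ must be indefinite: if $H$ were positive semi-definite, then $\varphi_n^T H\varphi_n=0$ would force $\varphi_n\in\ker H$ for every $n$; but $\Phi$ spans $\C^3$ and hence spans $\R^3$ by Lemma~\ref{thm:spans}, giving $\ker H=\R^3$ and $H=O$, contradicting $v\ne 0$. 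The negative semi-definite case is handled symmetrically.

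With $H$ indefinite, Proposition~\ref{thm:prop4.1} supplies $x,y\in\C^3$ such that $H=\Re(xx^{\ast}-yy^{\ast})$, and Lemma~\ref{lemma:identity} then gives
$$
|\langle x,\varphi_n\rangle|^2 - |\langle y,\varphi_n\rangle|^2 = \varphi_n^T H \varphi_n = 0
$$
for every $n$. If $\Phi$ were conjugate phase retrievable, Theorem~\ref{thm:equiv}(2) would force $\Re(xx^{\ast})=\Re(yy^{\ast})$, i.e.\ $H=O$, contradicting $v\ne 0$.

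The one subtle step is the semi-definite exclusion, where the spanning property of $\Phi$ enters through Lemma~\ref{thm:spans}; everything else is a direct assembly of Lemma~\ref{lemma:identity}, Theorem~\ref{thm:equiv}, and the structural lifting in Proposition~\ref{thm:prop4.1}. Notably, the argument bypasses the complement property entirely, consistent with the authors' observation that the clean $\C^2$ characterization via complement property does not carry over to $\C^3$.
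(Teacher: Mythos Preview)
Your proof is correct and follows essentially the same approach as the paper: both argue by contrapositive, pull a nonzero real symmetric $H$ from $\ker(\Omega_{\Phi})$, rule out semi-definiteness using the frame property, and then invoke Proposition~\ref{thm:prop4.1} together with Lemma~\ref{lemma:identity} to produce non-conjugate-equivalent $x,y$ with identical measurements. Your treatment of the semi-definite exclusion is in fact slightly more careful than the paper's --- you correctly note that $\varphi_n^T H\varphi_n=0$ for $H\succeq 0$ only gives $\varphi_n\in\ker H$ (not $\varphi_n=0$), and then use the spanning property via Lemma~\ref{thm:spans} to force $\ker H=\R^3$.
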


\begin{proof} We just need to prove that  $\Phi$ is conjugate phase retrievable
over $\C^3$ implies that $\ker (\Omega_{\Phi}) = \{0\}$ since the other side was
proved in Proposition \ref{thm:det}. Suppose that $\ker (\Omega_{\Phi}) \ne
\{0\}$ and we take ${\bf v}\in \ker(\Omega_{\Phi})$ and ${\bf v}\ne 0$. Note
that ${\bf v}\in \R^{M(M+1)/2}$ and if we order ${\bf v}$ as $$ {\bf v} =
(v_{11} \ \cdots \ v_{MM} \ v_{12} \ \cdots  \ v_{1M} \ \cdots \ v_{(M-1)M})^T
$$ in a way analogous to (\ref{eq3.5}),  we can associate uniquely and naturally
$Q_{\bf v} = [v_{ij}]\in \H^{M\times M}_{\R}$. Hence, $\Omega_{\Phi}{\bf v} = 0$
holds if and only if $$ \ 0=\left<\omega_{\varphi_n},{\bf v}\right> =
\varphi_n^T Q_{\bf v} \varphi_n \ \mbox{for all} \  n=1,\ldots,N.  $$ Note that
$Q_{\bf v}$ cannot be positive semidefinite or negative semidefinite. If not, the above
equation implies that $\varphi_n = 0$ for all $n$, which is impossible since
$\varphi_n$ forms a frame. Hence, Proposition \ref{thm:prop4.1} implies the
existence of $x,y\in \C^3$ such that $\Re(xx^{\ast}-yy^{\ast}) = Q_{\bf v}$. As
${\bf v}\ne 0$, so $x$ and $y$ are not conjugate equivalent. However, $$
0=\varphi_n^T Q_{\bf v} \varphi_n = \varphi_n^T (\Re(xx^{\ast}-yy^{\ast})
)\varphi_n = |\left<x,\varphi_n\right>|^2-|\left<y,\varphi_n\right>|^2 $$ by
Lemma \ref{lemma:identity} (\ref{eq3.3+}). This means that $\Phi$ is not
conjugate phase retrievable as it cannot distinguish $x$ and $y$.  \end{proof}

We are now ready to prove Theorem \ref{thm:th1} for $M=3$.

\begin{proof}[Proof of Theorem \ref{thm:th1} when $M=3$.] We first prove
statement (2) in Theorem \ref{thm:th1}. The sufficiency was proved in
Proposition \ref{thm:det}. For the necessity, we note that
$\det(\Omega_{\Phi})\ne 0$ if and only if $\ker (\Omega_{\Phi}) = \{0\}$. Hence,
the necessity follows from Theorem \ref{thm:C3}.

\medskip

Finally, we also note that if $N\le 5<6$, then $\ker (\Omega_{\Phi})$ must be
non-trivial. By Theorem \ref{thm:C3}, $\Phi$ cannot be conjugate phase
retrievable. Hence, there is no conjugate phase retrievable frame with
cardinality less than 6. Combining with statement (2), we conclude that
$N_{\ast}(3)  = N^{\ast}(3)= 6$.  \end{proof}

\medskip

The following example illustrates that the complement property is not sufficient
to determine conjugate phase retrievable frames when $M =3$.

\begin{example} {\rm By Proposition \ref{thm:prop4.1}, we can find  $x,y\in
\C^3$ such that $\Re(xx^{\ast}-yy^{\ast}) = \mbox{diag}(1,1,-1)$. Hence, $x,y$
are not conjugate equivalent. Let $\Phi$ be a finite set of vectors taken from
the cone $x_1^2+x_2^2=x_3^2$. Then for any $\varphi =
(\varphi_1,\varphi_2,\varphi_3)^T\in\Phi$, we have} $$
|\left<x,\varphi\right>|^2-|\left<y,\varphi\right>|^2 =
\varphi^T(\Re(xx^{\ast}-yy^{\ast}))\varphi = \varphi_1^2+\varphi_2^2-\varphi_3^2
= 0.  $$ {\rm This shows that $\Phi$ cannot be conjugate phase retrievable.
Since $\Phi$ is taken from the cone, it is easy to see that we can take $\Phi$
to span $\R^3$ or even satisfies the complement property. Hence, this also shows
that the complement property is not sufficient to guarantee conjugate phase
retrievability for $M\ge 3$.} \end{example}

\section{Generic Numbers}\label{sec:generic}

In this section, we will be proving the generic number required for conjugate
phase retrieval using real vectors. To this end, we need some terminology from
algebraic geometry and we will use a theorem in a recent paper by Wang and Xu
\cite{WX}.

\medskip

A subset $V \subset \C^M$ is called a {\it complex algebraic variety} if $V$ is
the zero set in $\C$ of a collection of polynomials in $\C[x]$. Let also
$V_{\mathbb R}$ be the set of all real points of $V$ (i.e. $V_{\mathbb R} =
V\cap {\mathbb R}^M$). We will be following the definition of dimension of and
algebraic variety in \cite[Section 3.1]{WX} (see also \cite[Chapter 9]{CLS}) and
it is denoted by $\dim(\cdot)$. For real algebraic variety $X$, its dimension is
denoted by $\dim_{\R}(X)$.   The set $V$ is called a {\it complex projective
variety} if $V$ is the zero set in $\C$ of a collection of homogeneous
polynomials in $\C[x]$.

\medskip

\begin{definition} Let $V$ be a complex projective variety with $\dim V >0$ and
let $\ell_{\alpha}: \C^M\rightarrow \C$, $\alpha\in I$ ($I$  is an index set),
be a family of linear functions. We say that $V$ is called {\bf admissible} with
respect to $\{\ell_{\alpha}:\alpha\in I\}$ if $\dim(V\cap \{{x\in\C^M:
\ell_{\alpha}(x) = 0}\})<$ $\dim V$ for all $\alpha\in I$.  \end{definition}

This admissibility is equivalent to the property that for a generic point $x\in V$
and any small neighborhood $U$ of $x$, $U\cap V$ is not completely contained in
the hyperplane $\ell_{\alpha}(x)=0$.

\medskip

\begin{theorem}\cite[Theorem 3.2 and Corollary 3.3]{WX}\label{theorem:WX} For
$j=1,\ldots,N$, let $L_j: \C^n\times \C^m\rightarrow\C$ be bilinear functions
and $V_j$ be complex projective varieties on $\C^n$. Set $V = V_1\times \cdots
\times V_N\subset (\C^{n})^N$. Let $W$ be an complex projective variety. Suppose
that for each $j$, $V_j$ is admissible with respect to the linear functions
$\{f^{w}(\cdot): = L_j(\cdot, w): w\in W\setminus\{0\}\}$. We have the following
conclusions: \begin{enumerate} \item If $N \ge \dim W$, then there exists an
algebraic variety $Z\subset V$ with $\dim Z< \dim V$ such that for any $X =
(x_j)_{j=1}^N\in V\setminus Z$ and $w\in W$, $L_j(x_j,w) = 0$ for all
$j=1,\ldots,N$ implies $w = 0$.  \item If $\dim V_{\R}= \dim V,$ then there
exists a real algebraic variety $\widetilde{Z}\subset V_{\R}$ with {\rm
dim}$_{\R}\widetilde{Z}<{\rm dim}_{\R}V_{\R}$ such that for any $X =
(x_j)_{j=1}^N\in V_{\R}\setminus \widetilde{Z}$ and $w\in W$, $L_j(x_j,w) = 0$
for all $j=1,\cdots,N$ implies $w = 0$.  \end{enumerate} \end{theorem}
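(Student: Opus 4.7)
The plan is to run the classical incidence-variety dimension count: estimate the exceptional locus $Z$ by fibering it in two directions, using admissibility in one direction and scale invariance in the other. Form the incidence correspondence
\[
I := \{(X,w) \in V \times (W \setminus \{0\}) : L_j(x_j,w) = 0 \text{ for all } j = 1,\ldots,N\},
\]
with projections $\pi_1 : I \to V$ and $\pi_2 : I \to W \setminus \{0\}$. The exceptional locus sought is $Z := \overline{\pi_1(I)}$, so showing $Z$ is a proper subvariety of $V$ reduces to proving $\dim Z < \dim V$.

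First I would bound $\dim I$ via $\pi_2$. For each $w \in W \setminus \{0\}$ the fiber factors as a product
\[
\pi_2^{-1}(w) = \prod_{j=1}^{N} \{x_j \in V_j : L_j(x_j,w) = 0\},
\]
and admissibility forces each factor to have complex dimension strictly less than $\dim V_j$. Hence $\dim \pi_2^{-1}(w) \le \sum_j (\dim V_j - 1) = \dim V - N$, and the fiber-dimension theorem yields $\dim I \le \dim W + (\dim V - N)$. Next I would bound $\dim Z$ via $\pi_1$. Bilinearity of each $L_j$ makes the defining conditions invariant under the rescaling $w \mapsto \lambda w$, $\lambda \ne 0$, so every nonempty fiber $\pi_1^{-1}(X)$ contains a punctured line and has dimension at least $1$. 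Applying the fiber-dimension theorem in the reverse direction, component by component on the irreducible decomposition of $I$, gives $\dim Z \le \dim I - 1$. Combining the two estimates yields $\dim Z \le \dim V - N + \dim W - 1$, and the hypothesis $N \ge \dim W$ then forces $\dim Z \le \dim V - 1 < \dim V$, proving~(1).

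For part~(2) I would form the real incidence set $I_{\R} := I \cap (V_{\R} \times W_{\R})$ and execute the same two-projection argument over $\R$. The crucial point is that admissibility over $\C$ descends to a strict real fiber-dimension drop once $\dim_{\R} V_{\R} = \dim V$: for any $w \in W_{\R} \setminus \{0\}$,
\[
\dim_{\R}(V_{j,\R} \cap \ker L_j(\cdot,w)) \le \dim_{\C}(V_j \cap \ker L_j(\cdot,w)) < \dim V_j = \dim_{\R} V_{j,\R},
\]
using the standard bound that the real locus of a complex algebraic set has real dimension at most the complex dimension of that set. The scaling argument for $\pi_1$-fibers carries over verbatim, and one obtains a real subvariety $\widetilde{Z} \subsetneq V_{\R}$ of strictly smaller real dimension, as required.

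I expect the main obstacle to be not the complex dimension count---which is the textbook incidence-variety technique---but rather the real descent in part~(2). Specifically, one has to argue carefully that the \emph{strict} complex inequality produced by admissibility yields a \emph{strict} real inequality against the benchmark $\dim_{\R} V_{j,\R}$, and this only works because the full-dimensionality hypothesis pins the benchmark to $\dim V_j$. A secondary bookkeeping issue is that the fiber-dimension theorem is cleanest on irreducible varieties, so one should decompose $V_j$, $W$, and $I$ into irreducible components and verify the chain of inequalities on each maximal-dimensional component separately.
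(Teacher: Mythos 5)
This theorem is imported from Wang--Xu \cite{WX}; the paper gives no proof of it, so I am comparing your proposal against the standard argument in the cited source. Your part (1) is essentially that argument: the incidence correspondence $I\subset V\times(W\setminus\{0\})$, the bound $\dim I\le \dim W+\dim V-N$ coming from admissibility applied factor-by-factor to the $\pi_2$-fibers, and the drop $\dim \overline{\pi_1(I)}\le \dim I-1$ coming from the $\C^*$-scaling invariance of the $w$-coordinate (to make the last step airtight you should note that the $\C^*$-action preserves each irreducible component of $I$, since $\C^*$ is connected, so every nonempty $\pi_1$-fiber of every component contains a punctured line). That part is correct.

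Part (2) has a genuine gap. The stated conclusion quantifies over \emph{all} $w\in W$, but by forming $I_{\R}=I\cap(V_{\R}\times W_{\R})$ you only ever exclude points $X\in V_{\R}$ that annihilate a nonzero \emph{real} $w$. Your $\widetilde Z$ therefore certifies the implication only for $w\in W_{\R}$, which is a strictly weaker statement than the one claimed (it happens to suffice for the application in Section 5, where the relevant $Q$ are real symmetric, but it is not the theorem). Moreover, the claim that the two-projection argument ``carries over verbatim'' to $\R$ is optimistic: the fiber-dimension theorem over $\R$ is a statement about semialgebraic sets (images of real algebraic sets under polynomial maps are only semialgebraic), so one needs Tarski--Seidenberg/Hardt-type machinery rather than a verbatim transcription. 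Both problems disappear if you derive (2) directly from (1): take $\widetilde Z:=Z\cap\R^{nN}$, which is a real algebraic variety with
\[
\dim_{\R}\widetilde Z\;\le\;\dim_{\C}Z\;<\;\dim V\;=\;\dim_{\R}V_{\R},
\]
using the standard bound on the real dimension of the real points of a complex variety together with the full-dimensionality hypothesis $\dim V_{\R}=\dim V$ (which is exactly what makes the last equality, and hence the strictness against the correct benchmark, available). For any $X\in V_{\R}\setminus\widetilde Z\subseteq V\setminus Z$, part (1) then gives the conclusion for every complex $w\in W$, with no separate real incidence count needed.
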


\begin{proof}[Proof of Theorem \ref{thm:main}] The proof is inspired by Theorem
4.1 in \cite{WX}. We will let $V_{r}$ be the complex algebraic variety of
$M\times M$ complex symmetric matrices (i.e. $A^T = A$, and $A$ has complex
entries) with rank at most $r$. This is a complex projective variety defined by
the homogeneous polynomials vanishing on all $(r+1)\times(r+1)$ minors. The real
points $(V_r)_{\R}$ are all the real symmetric matrices with rank at most $r$.
In our notation, $(V_{r})_{\R} = {\mathcal S}^r_{\R}$. Moreover, 
$$ \mbox{dim}
V_r = \mbox{dim}_{\R}((V_{r})_{\R}) = Mr-\frac{r(r-1)}{2}  
$$
(For this fact, see Theorem 4.1 \cite{WX}).
\medskip

Consider bilinear functions $L_j: C^{M\times M}\times \C^{M\times M}$ and $V =
V_1\times....\times V_1\subset{(\C^{M\times M}})^N$. (i.e. $N$ copies of $V_1$).
In particular, we will consider $$ L_j(A,Q) = Tr(AQ), \ \mbox{for} \ j=1,\ldots
,N $$ ($Tr$ denotes the trace of the matrix). If $A\in V_1$ and positive semidefinite, then
$A = \varphi\varphi^T$ and $L_j(A,Q) = \varphi^T Q\varphi$. Let $W = V_4$. Then
dim$V_4=4M-6$. Assume we can prove that $V_1$ is admissible with respect to
$\{f^{Q}(\cdot) = L_j(\cdot, Q): Q\in W\}$. Then Theorem \ref{theorem:WX} (2)
and the fact that positive semidefinite matrices of rank 1 is open in $V_1$
(See \cite[Remark after Theorem 4.1]{WX}) imply that there exists an real
algebraic variety $\widetilde{Z}$ with dimension strictly less than that of
$V_{\R}$ such that for every $(\varphi_n\varphi_n^T)_{n=1}^N \in V_{\R}\setminus
\widetilde{Z}$, the following property holds: $$ L_j(\varphi_n\varphi_n^T,Q) =
\varphi_n^T Q\varphi_n = 0 \ \mbox{for all} \ n=1,..,N \ \mbox{and} \ Q\in W \ \
\Longrightarrow  \ \ Q = O.  $$ But then this implies $\ker (\mathcal A)\cap
{\mathcal S}^4_{\R} = \{O\}$ (since ${\mathcal S}^4_{\R} \subset W$). Hence,
generic frame will be conjugate phase retrievable by Theorem \ref{thm:S}.

\medskip

It remains to show $V_1$ is admissible with respect to $\{f^{Q}(\cdot) =
L_j(\cdot, Q): Q\in W\}$. It suffices to show that a generic point $A_0\in V_1$
and any non-zero $Q_0 \in W$, we must have $Tr(AQ_0)\not\equiv O$ in any small
neighborhood of $A_0$ in $V_1$. If $Tr(A_0Q_0)\ne 0$, then we are done. So we
assume that $Tr(A_0Q_0)=0$. In this case, we factorize $A_0 = uv^T$ and for any
fixed $z,w\in \C^M$, consider $$ A_t= (u+tz)(v+tw)^T $$ Then $$ Tr(A_tQ_0) =
Tr((u+tz)(v+tw)^TQ_0) = t (Tr(uw^T+zv^T)Q_0)+t^2Tr(zw^TQ_0).  $$ As $Q_0\ne O$,
we can find $z,w$ such that $w^TQ_0z\ne 0$, so that $Tr(zw^TQ_0) =
Tr(w^TQ_0z)\ne 0$. Thus, for any sufficiently small $t$,  $Tr(AQ_0)\not\equiv O$
in any small neighborhood of $A_0$ in $V_1$. This completes the whole proof.
\end{proof}

\section{Strict Conjugate Phase retrievability}\label{sec:strict conjugate
section}

In this section, we are going to give a systematic study of general frame
$\Phi\subset \C^M$ (not necessarily real vectors) that are conjugate phase
retrievable. Of course, we know that a complex phase retrievable frame must be
conjugate phase retrievable. Our interest will be frames in the following
definition.

\begin{definition} We say a frame is \textbf{strictly conjugate phase
retrievable} if the frame is conjugate phase retrievable but not complex phase
retrieval.  \end{definition}

\medskip

Complex phase retrieval fails using real vectors because there always exist $x$
and $\conj{x}$ that are not equivalent up to phase. A natural question that
arises is: what are the vectors $x$ which are equivalent to
$\conj{x}$ up to a phase (i.e. $x\sim \overline{x}$)? It turns out that these
vectors will all be phased real vectors. Moreover, they will give us an
important characterization for strictly conjugate phase retrievable frame.

\begin{definition} We say that $y$ is a \textbf{phased real vector} if $y$
belongs to the following set: \[\vartheta{\mathbb R}^M = \{\lambda v \mid
\lambda \in \T,  \ v \in \R^M \}\] \end{definition}

\medskip

\begin{proposition}\label{thm:Wequiv} A vector $y \in \C^M$ is equivalent to its
conjugate $\conj{y}$ up to a global phase if and only if $y \in\vartheta{\mathbb
R}^M.$ \end{proposition}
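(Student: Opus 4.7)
The plan is to prove both directions directly from the definition, with no heavy machinery needed beyond a coordinate-wise polar-form argument.

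For the forward direction, I would assume $y = \lambda v$ with $\lambda \in \T$ and $v \in \R^M$. Then $\conj{y} = \conj{\lambda}\, v$. Since $|\lambda| = 1$, we have $v = \conj{\lambda}\, y$, so $\conj{y} = \conj{\lambda}^{\,2}\, y$. Setting $e^{i\theta} = \conj{\lambda}^{\,2}$ we obtain $y \sim \conj{y}$. This is a one-line computation and requires no further input.

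For the converse, I would assume $\conj{y} = e^{i\theta} y$ for some $\theta \in [0, 2\pi)$, and derive that $y$ must be a scalar multiple of a real vector by a phase. Writing the coordinates as $y_j = r_j e^{i\alpha_j}$ with $r_j \ge 0$, the assumption gives $r_j e^{-i\alpha_j} = e^{i\theta} r_j e^{i\alpha_j}$ for every $j$. Whenever $r_j > 0$ this forces $e^{2i\alpha_j} = e^{-i\theta}$, so $\alpha_j \equiv -\theta/2 \pmod{\pi}$. Setting $\lambda = e^{-i\theta/2}$, each nonzero $y_j$ equals $\pm r_j \lambda$, and the zero coordinates are trivially of the form $\lambda \cdot 0$. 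Therefore $y = \lambda v$ with $v \in \R^M$ whose $j$-th entry is $\pm r_j$, establishing $y \in \vartheta \R^M$.

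The proof is essentially routine; the only subtlety is noticing that the equation $e^{-2i\alpha_j} = e^{i\theta}$ determines $\alpha_j$ only modulo $\pi$ (not modulo $2\pi$), which is precisely what allows the real vector $v$ to carry signs rather than requiring all its coordinates to be nonnegative. This is the step that needs to be stated carefully, but there is no genuine obstacle. Note that I am not invoking Theorem \ref{thm:equiv}; a direct coordinate argument is shorter than translating through the outer-product formulation.
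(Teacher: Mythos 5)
Your proof is correct and takes essentially the same approach as the paper: a direct coordinate-wise polar-form argument showing that each nonzero coordinate's argument is determined modulo $\pi$, so that $y$ is a common phase times a signed real vector. The only cosmetic difference is your choice of writing the hypothesis as $\conj{y}=e^{i\theta}y$ rather than $y=e^{i\theta}\conj{y}$, which merely flips the sign of $\theta$.
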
 \begin{proof} It is clear that if $y
\in\vartheta{\mathbb R}^M$, then $y \sim \conj{y} .$ Suppose $y \sim \conj{y} .$
Then there exists $0 \le \theta < 2 \pi$ such that $y_n = e^{i \theta}
\conj{y_n}$ for all $n =1,\ldots,M$.    Writing $y_n = |y_n|e^{i \theta_n}$, it
follows that $e^{i \theta_n} =e^{i (\theta - \theta_n)}.$ Thus, $2\theta_n =
\theta + 2\pi  k$  for some $k \in \N,$ which implies that $\theta_n =
\frac{\theta}{2} + \pi  k.$ Hence, $e^{i \theta_n} = e^{i \frac{\theta}{2} }$
or $-e^{i \frac{\theta}{2} }.$ Therefore, $y_n = \pm|y_n| e^{i \frac{\theta}{2}
}$ for each $n =1,\ldots,M$ with sign depending on $n.$ Thus, $y = \lambda v$
with $\lambda = e^{i \frac{\theta}{2}}$ and $v = (\pm |y_1| \; \pm |y_2| \;
\cdots \; \pm |y_m|)^T.$ \end{proof}

 Note that Proposition~\ref{thm:Wequiv} implies that no frame $\Phi \subseteq
 \vartheta\R^M$ is complex phase retrievable.

\begin{theorem}\label{thm:selfequiv} Suppose that $\Phi = \{ \varphi_n\}_{i =
1}^N$ is a frame over $\C^M$ that is conjugate phase retrievable.  Then, $\Phi$
is strictly conjugate phase retrievable if and only if there exists some $y \in
\C^M$ with $y \notin \vartheta\R^M$ but $| \left< y, \varphi_n \right>|^2 = |
\left< \conj{y}, \varphi_n \right> |^2  $ for all $n \in \{1,\ldots,N\}.$
\end{theorem}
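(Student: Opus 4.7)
The plan is to prove both directions by reducing to Proposition \ref{thm:Wequiv}, which tells us that $y \sim \overline{y}$ holds precisely when $y \in \vartheta\R^M$. The theorem is essentially a careful unpacking of definitions once this characterization is in hand, and the hypothesis of conjugate phase retrievability lets me restrict the possible failure modes of complex phase retrievability to the single ambiguity $y$ versus $\overline{y}$.

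For the forward direction, I would assume $\Phi$ is strictly conjugate phase retrievable, so complex phase retrieval fails: there exist $x, y \in \C^M$ with $|\langle x, \varphi_n\rangle| = |\langle y, \varphi_n\rangle|$ for all $n$, yet $x \not\sim y$. Conjugate phase retrievability forces $x \overset{\mathrm{conj}}{\sim} y$, and since $x \not\sim y$, we must have $x \sim \overline{y}$. Replacing $x$ by $\overline{y}$ (which does not change any of the magnitudes $|\langle x, \varphi_n\rangle|$), I obtain $|\langle \overline{y}, \varphi_n\rangle|^2 = |\langle y, \varphi_n\rangle|^2$ for all $n$. It remains to check $y \notin \vartheta\R^M$: if instead $y \in \vartheta\R^M$, Proposition \ref{thm:Wequiv} would give $\overline{y} \sim y$, hence $x \sim \overline{y} \sim y$, contradicting $x \not\sim y$.

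For the backward direction, I would assume the existence of $y \in \C^M \setminus \vartheta\R^M$ with $|\langle y, \varphi_n\rangle|^2 = |\langle \overline{y}, \varphi_n\rangle|^2$ for all $n$. By Proposition \ref{thm:Wequiv}, $y \notin \vartheta\R^M$ implies $y \not\sim \overline{y}$. Thus the pair $(y, \overline{y})$ exhibits a failure of complex phase retrievability, so $\Phi$ is not complex phase retrievable. Combined with the standing assumption that $\Phi$ is conjugate phase retrievable, this is exactly the definition of strictly conjugate phase retrievable.

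The proof has no real obstacle: both directions are two or three lines once Proposition \ref{thm:Wequiv} is invoked. The only subtle point worth stating explicitly is the reduction step in the forward direction where I replace the abstract pair $(x,y)$ witnessing failure of complex phase retrieval by the concrete pair $(\overline{y}, y)$; this is legitimate because $x \sim \overline{y}$ preserves all the magnitudes $|\langle \cdot, \varphi_n\rangle|$ and preserves non-equivalence to $y$.
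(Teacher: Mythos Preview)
Your proof is correct and follows essentially the same approach as the paper's: both directions hinge on Proposition~\ref{thm:Wequiv} to translate between $y \notin \vartheta\R^M$ and $y \not\sim \overline{y}$, and in the forward direction both arguments pass from an abstract witness pair $(x,y)$ with $x \sim \overline{y}$ to the concrete pair $(\overline{y}, y)$. The only cosmetic difference is the order in which you verify the two conclusions in the forward direction.
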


\begin{proof} Suppose that $\Phi$ is strictly conjugate phase retrievable.
Then, there exist $x,y \in \C^m$ such that $|\left< x , \varphi_n  \right>|^2 =
|\left< y, \varphi_n \right>|^2$ for all $n \in \{1,\ldots,N\},$ with $x
\not\sim y$ but $x \sim \conj{y}.$ Since $\sim$ is transitive , $y \sim
\conj{y}$ would imply that $x \sim y,$ a contradiction. Hence, $y \not\sim
\conj{y}$ and we conclude that $y \notin \vartheta\R^M.$ With $x \sim \conj{y}$,
we can write $x = \lambda  \conj{y}$ for some unimodular scalar $\lambda,$ which
gives $$ |\left< y , \varphi_n  \right>|^2 =  |\left< x, \varphi_n
\right>|^2 = |\left< \lambda
\conj{y} , \varphi_n \right> |^2 = |\left< 
\conj{y} , \varphi_n \right> |^2.  $$ \medskip

Thus, $|\left< y , \varphi_n  \right>|^2 =  |\left< \conj{y}, \varphi_n
\right>|^2$ for all $n \in \{1,\ldots,N\}$. This shows the necessity.

\medskip

Conversely, suppose that there exists some $y \in \C^M$ with $y \notin \vartheta
\R^M$ and $| \left< y, \varphi_n \right>|^2 = | \left< \conj{y}, \varphi_n
\right> |^2  $ for all $n =1,\ldots,N.$ Since $y \notin \vartheta\R^M$ implies
$y \not\sim \conj{y}$ it follows that $\Phi$ is not complex phase retrievable
and is only strictly conjugate phase retrievable by the original assumption.
\end{proof}

Strict conjugate phase retrieval relates directly back to phased real vectors.
The following set of equations characterize those frames which strictly allow
conjugate phase retrieval.  \begin{proposition}\label{thm:scp} Let $\Phi = \{
\varphi_n \}_{n = 1}^{N}$ be a conjugate phase retrievable frame in $\C^M$ where
$\varphi_n = ( \varphi_{1n} \ \varphi_{2n} \ \cdots \ \varphi_{Mn} )^T$ for $n
\in\{1,\ldots,N\}.$ Then $\Phi$ is strictly conjugate phase retrievable if and
only if there exists some $ x = (x_1 \ \cdots \ x_M)^T \in \C^M,$ with $x \notin
\vartheta\R^M$ and \begin{equation}\label{eq:IMstuff} \sum\limits_{j < k }\Im(
x_j \conj{x_k}) \Im( \conj{\varphi_{jn}} \varphi_{kn}) = 0 \end{equation} for
each $n =1,\ldots,N.$ \end{proposition}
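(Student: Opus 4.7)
The plan is to reduce the statement to Theorem~\ref{thm:selfequiv} and then perform a direct computation. Theorem~\ref{thm:selfequiv} says that under the hypothesis that $\Phi$ is conjugate phase retrievable, strict conjugate phase retrievability is equivalent to the existence of some $x \in \C^M$ with $x \notin \vartheta \R^M$ and $|\langle x, \varphi_n \rangle|^2 = |\langle \overline{x}, \varphi_n \rangle|^2$ for every $n$. So the entire content of the proposition is to recast this last system of equations in the coordinate-wise imaginary-part form displayed in \eqref{eq:IMstuff}.

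The core step is an expansion. Writing $\langle x,\varphi_n\rangle = \sum_j x_j \overline{\varphi_{jn}}$ and squaring, I compute
\begin{equation*}
|\langle x,\varphi_n\rangle|^2 = \sum_{j,k} x_j \overline{x_k}\, \overline{\varphi_{jn}}\varphi_{kn},
\qquad
|\langle \overline{x},\varphi_n\rangle|^2 = \sum_{j,k} \overline{x_j} x_k\, \overline{\varphi_{jn}}\varphi_{kn}.
\end{equation*}
Subtracting, the $j=k$ terms cancel, and pairing the $(j,k)$ and $(k,j)$ indices for $j<k$ gives
\begin{equation*}
|\langle x,\varphi_n\rangle|^2 - |\langle \overline{x},\varphi_n\rangle|^2
= 2i \sum_{j,k} \Im(x_j \overline{x_k})\, \overline{\varphi_{jn}} \varphi_{kn}
= -4 \sum_{j<k} \Im(x_j \overline{x_k})\,\Im(\overline{\varphi_{jn}} \varphi_{kn}),
\end{equation*}
where in the last equality I use that $\Im(x_j\overline{x_k})$ is antisymmetric in $(j,k)$ and combine the two index orderings to produce $\overline{\varphi_{jn}}\varphi_{kn} - \overline{\varphi_{kn}}\varphi_{jn} = 2i\Im(\overline{\varphi_{jn}}\varphi_{kn})$.

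Thus $|\langle x,\varphi_n\rangle|^2 = |\langle \overline{x},\varphi_n\rangle|^2$ holds for a given $n$ if and only if the corresponding instance of \eqref{eq:IMstuff} holds. Combining this equivalence across all $n$ with Theorem~\ref{thm:selfequiv} completes the proof. I do not anticipate any real obstacle here — the algebra is elementary; the only care needed is to group the double sum correctly so that the antisymmetry collapses it into a sum over $j<k$ with the stated factors.
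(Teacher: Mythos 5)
Your proposal is correct and follows essentially the same route as the paper: the paper isolates your expansion as Lemma~\ref{thm:IMsum} (showing $|\langle x,\varphi\rangle|^2-|\langle \overline{x},\varphi\rangle|^2=-4\sum_{j<k}\Im(x_j\overline{x_k})\Im(\overline{\varphi_j}\varphi_k)$ by the same pairing of $(j,k)$ with $(k,j)$) and then combines it with Theorem~\ref{thm:selfequiv} exactly as you do. No gaps.
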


 Proposition~\ref{thm:scp} will be a consequence of the following lemma.
 \begin{lemma}\label{thm:IMsum} For $x = (x_1 \ \cdots \ x_M)^T , \varphi =
 (\varphi_1 \ \cdots \ \varphi_M)^T \in \C^M,$ $$ | \langle x,\varphi \rangle|^2
 = | \langle \conj{x}, \varphi\rangle|^2 \ \text{if and only if}  \
 \sum\limits_{j < k }\Im(x_j \conj{x_k}) \Im(\conj{\varphi_j} \varphi_k)= 0.  $$
 \end{lemma}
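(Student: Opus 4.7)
The plan is to prove the lemma by direct expansion of the two squared inner products and then computing their difference.

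First, I would write out $|\langle x, \varphi\rangle|^2$ and $|\langle \conj{x},\varphi\rangle|^2$ as double sums:
\begin{align*}
|\langle x,\varphi\rangle|^2 &= \sum_{j,k} x_j \conj{x_k}\, \conj{\varphi_j}\varphi_k, \\
|\langle \conj{x},\varphi\rangle|^2 &= \sum_{j,k} \conj{x_j} x_k\, \conj{\varphi_j}\varphi_k.
\end{align*}
Subtracting and using $x_j\conj{x_k} - \conj{x_j}x_k = 2i\,\Im(x_j\conj{x_k})$, I obtain
\[
|\langle x,\varphi\rangle|^2 - |\langle\conj{x},\varphi\rangle|^2 \;=\; 2i \sum_{j,k}\Im(x_j\conj{x_k})\,\conj{\varphi_j}\varphi_k.
\]

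Next I would observe that diagonal terms vanish because $\Im(|x_j|^2) = 0$, so the sum collapses to off-diagonal indices. Then I would pair the $(j,k)$ and $(k,j)$ terms. Since $\Im(x_j\conj{x_k})$ is antisymmetric in $(j,k)$, the coefficient in each paired sum becomes $\conj{\varphi_j}\varphi_k - \conj{\varphi_k}\varphi_j = 2i\,\Im(\conj{\varphi_j}\varphi_k)$. Collecting the factors of $i$, the difference simplifies to
\[
|\langle x,\varphi\rangle|^2 - |\langle\conj{x},\varphi\rangle|^2 \;=\; -4 \sum_{j<k} \Im(x_j\conj{x_k})\,\Im(\conj{\varphi_j}\varphi_k).
\]

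From this identity, the equivalence of the lemma is immediate: the left-hand side vanishes if and only if the right-hand side vanishes. The computation is elementary algebraic manipulation; the only mild point requiring care is the bookkeeping of the two factors of $i$ and the minus sign when combining $2i\cdot 2i = -4$, and verifying both sides are manifestly real (which serves as a useful sanity check since $\Im(x_j\conj{x_k})\,\Im(\conj{\varphi_j}\varphi_k)$ is a product of real numbers). There is no real obstacle; the proof is a one-step direct calculation.
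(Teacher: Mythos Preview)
Your proposal is correct and follows essentially the same approach as the paper: expand both squared inner products as double sums, subtract to obtain $2i\sum_{j\ne k}\Im(x_j\conj{x_k})\,\conj{\varphi_j}\varphi_k$, then pair off-diagonal terms to reduce to $-4\sum_{j<k}\Im(x_j\conj{x_k})\,\Im(\conj{\varphi_j}\varphi_k)$. The only cosmetic difference is that the paper phrases the pairing via $z+\conj{z}=2\Re(z)$ and then uses $\Re(iw)=-\Im(w)$, whereas you use the antisymmetry $\Im(x_k\conj{x_j})=-\Im(x_j\conj{x_k})$ directly; both yield the same identity.
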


\begin{proof}[Proof of Lemma~\ref{thm:IMsum}] Let $x = (x_1 \ \cdots \ x_M)^T ,
\varphi = (\varphi_1 \ \cdots  \ \varphi_M)^T \in \C^M.$ Expanding using the
definition of the conjugate, we may write \begin{align*} | \left< x, \varphi
\right> |^2 &= \left(\sum\limits_{j = 1}^{M} x_j \conj{\varphi_j}\right)
\left(\sum\limits_{k = 1}^{M} \conj{x_k}\varphi_k\right) =\sum\limits_{j,k =
1}^{M} x_j \conj{\varphi_j} \conj{x_k}\varphi_k \\
&=\sum\limits_{k=1}^{M}|x_k\varphi_k|^2 + \sum\limits_{j,k=1,j \neq k}^{M} x_j
\conj{\varphi_j} \conj{x_k}\varphi_k.  \end{align*} Thus, \begin{align*} |\left<
x,\varphi\right> |^2 - |\left< \conj{x},\varphi\right>|^2 &= \sum\limits_{j,k=
1, \, j \neq k}^{M} x_j \conj{\varphi_j} \conj{x_k}\varphi_k - \conj{x_j}
\conj{\varphi_j} x_k\varphi_k \\ &= \sum\limits_{j,k = 1, \, j\neq k}^{M}
\conj{\varphi_j} \varphi_k ( x_j \conj{x_k} - \conj{x_j}x_k) \\ &=
\sum\limits_{j,k = 1, \, j \neq k}^{M}\conj{\varphi_j} \varphi_k (2i \Im(x_j
\conj{x_k}).  \end{align*} Now, for any fixed $j\ne k$, we observe that we have
the equality $\conj{\varphi_j} \varphi_k (2i \Im(x_j \conj{x_k})) = \conj{
\conj{\varphi_k} \varphi_j (2i \Im(x_k \conj{x_j}}).$ Therefore, we can split
our sum into a sum over indices with $j < k$ and a sum over indices with $k <
j$, \begin{align*} \sum\limits_{j,k = 1, \, j \neq k}^{M}\conj{\varphi_j}
\varphi_k 2i \Im(x_j\conj{x_k})
&=\sum\limits_{j < k } \left[ \conj{\varphi_j} \varphi_k 2i \Im(x_j \conj{x_k})
+ \conj{\conj{\varphi_j}\varphi_k 2i \Im(x_j \conj{x_k})} \right] \\ &=
\sum\limits_{j < k} 4\Re(i(\conj{\varphi_j} \varphi_k \Im(x_j \conj{x_k})))  \\
&= \sum\limits_{j < k }-4\Im(\conj{\varphi_j} \varphi_k \Im(x_j \conj{x_k})) \\
&=\sum\limits_{j < k }-4 \Im(x_j \conj{x_k}) \Im(\conj{\varphi_j} \varphi_k).
\end{align*}

Therefore,  $|\left< x,\varphi \right>|^2 = |\left< \conj{x},\varphi \right>|^2$
if and only if $\displaystyle \sum\limits_{j < k } \Im(x_j \conj{x_k})
\Im(\conj{\varphi_j} \varphi_k) = 0.$ \end{proof}

\begin{proof}[Proof of Proposition~\ref{thm:scp}] Suppose $\Phi$ is strictly
conjugate phase retrievable. By Theorem~\ref{thm:selfequiv}, there exists some
$x \in \C^M$ with $x \not\sim \conj{x} $ and $| \left< x , \varphi_n \right> |^2
= |\left< \conj{x} , \varphi_n \right> |^2$ for $n  = 1,\ldots,N.$ Using Lemma
~\ref{thm:IMsum} with $x$ and $\varphi_n$ for each $n =1,\ldots,N$  completes
this direction of the proof.

Suppose there exists a vector $x \notin \vartheta\R^M$ and that \[\displaystyle
\sum\limits_{i < j }\Im( x_i \conj{x_j}) \Im( \conj{\varphi_{in}}  \varphi_{jn})
= 0 \text{ for each } n \in [N].\] Then, Lemma ~\ref{thm:IMsum} implies that
$|\left< x, \varphi_n \right>|^2 = |\left< \conj{x}, \varphi_n \right>|^2$ for
each $n =1,\ldots,N$  which gives that $\Phi$ is not complex phase retrievable.
\end{proof}

Note that given any conjugate phase retrievable $\Phi \subseteq \vartheta\R^M$,
equation~\eqref{eq:IMstuff} holds for any $\varphi \in \Phi$ and $x \in \C^M$
because $\Im(\conj{\varphi_j} \varphi_k)$ are always zero. Hence,
Proposition~\ref{thm:scp} implies $\Phi$ is strictly conjugate phase
retrievable. In the following, we show that in $\C^2,$ every strictly conjugate
phase retrievable frame is a frame in $\vartheta\R^M.$

\begin{theorem}\label{thm:scpW} Any frame over $\C^2$ that is strictly conjugate
phase retrievable must be a frame contained in $\vartheta\R^M$. Furthermore, we
have the following consequence: \begin{enumerate} \item Any frame $\Phi
\not\subseteq \vartheta\R^M$ on $\C^2$ that is conjugate phase retrievable must
be complex phase retrievable and have at least four vectors.  \item On the other
hand, any real-valued frame $\Phi\subset\R^2$ on $\C^2$ that is conjugate phase
retrievable requires only  at least three vectors.  \end{enumerate}
\end{theorem}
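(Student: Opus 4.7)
The plan is to invoke Proposition \ref{thm:scp} and exploit the fact that on $\C^2$ the sum in \eqref{eq:IMstuff} collapses to a single term (the pair $(j,k)=(1,2)$). If $\Phi\subset\C^2$ is strictly conjugate phase retrievable, then Proposition \ref{thm:scp} provides some $x=(x_1,x_2)^T\notin\vartheta\R^2$ with
$$\Im(x_1\conj{x_2})\,\Im(\conj{\varphi_{1n}}\varphi_{2n})=0$$
for every $n=1,\ldots,N$. The goal is then to extract a structural condition on each $\varphi_n$ by first ruling out the other factor.

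The key step I would carry out is a bidirectional characterization: for any $z=(z_1,z_2)^T\in\C^2$, one has $\Im(z_1\conj{z_2})=0$ if and only if $z\in\vartheta\R^2$. If $z_1=0$ or $z_2=0$ then $z$ is a scalar multiple of a standard basis vector and automatically lies in $\vartheta\R^2$; otherwise, writing $z_i=|z_i|e^{i\theta_i}$ gives $\Im(z_1\conj{z_2})=|z_1||z_2|\sin(\theta_1-\theta_2)$, which vanishes precisely when $\theta_1-\theta_2\in\pi\Z$, so $z=e^{i\theta_2}(\pm|z_1|,|z_2|)^T\in\vartheta\R^2$. Applying the contrapositive to $x$ yields $\Im(x_1\conj{x_2})\neq 0$; hence $\Im(\conj{\varphi_{1n}}\varphi_{2n})=0$ for every $n$, and applying the forward direction to each $\varphi_n$ shows $\varphi_n\in\vartheta\R^2$. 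This finishes the main assertion.

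For the consequences, part (2) is immediate from Theorem \ref{thm:th1}, since $N_\ast(2)=3$ and $\R^2\subset\vartheta\R^2$. For part (1), the contrapositive of the main assertion forces any conjugate phase retrievable $\Phi\not\subseteq\vartheta\R^2$ on $\C^2$ to be complex phase retrievable; the $N\ge 4$ bound then follows from the classical fact (referenced in the survey \cite{B15}) that complex phase retrieval on $\C^2$ requires at least $4M-4=4$ measurements. The main obstacle is essentially the careful handling of the bidirectional two-coordinate characterization, including the degenerate case in which a coordinate vanishes; once that is in hand, everything else is a direct unpacking of Proposition \ref{thm:scp} in dimension two.
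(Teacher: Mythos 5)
Your proposal is correct and follows essentially the same route as the paper: both invoke Proposition \ref{thm:scp}, note that in $\C^2$ the sum \eqref{eq:IMstuff} reduces to the single term $\Im(x_1\conj{x_2})\,\Im(\conj{\varphi_{1n}}\varphi_{2n})$, deduce $\Im(x_1\conj{x_2})\neq 0$ from $x\notin\vartheta\R^2$, conclude $\varphi_n\in\vartheta\R^2$ for all $n$, and derive (1) and (2) from the four-vector lower bound for complex phase retrieval on $\C^2$ and from Theorem \ref{thm:th1}, respectively. Your explicit bidirectional lemma ($\Im(z_1\conj{z_2})=0$ iff $z\in\vartheta\R^2$, including the degenerate case) is just a more careful spelling-out of a step the paper treats tersely.
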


\begin{proof} Let $\Phi = \{ \varphi_1, \varphi_2, \ldots, \varphi_n \}$ be a
strictly conjugate phase retrievable frame over $\C^2.$ We first write the frame
matrix of $\Phi$ as \[ \left[ \begin{array}{cccc} \varphi_{11} & \varphi_{12} &
\cdots & \varphi_{1n} \\ \varphi_{21} & \varphi_{22} & \cdots & \varphi_{2n} \\
\end{array} \right] = \left[ \begin{array}{cccc} \mid & \mid &  & \mid \\
\varphi_{1} & \varphi_{2} & \cdots & \varphi_{n} \\ \mid & \mid &  & \mid
\end{array}\right].  \] By Theorem ~\ref{thm:scp}, there exists $y = (y_1 \ y_2
)^T$ in $\C^2$ with $y \notin \vartheta\R^M$ and \begin{align*} \Im(\varphi_{11}
\conj{ \varphi_{21}}) \Im( y_1 \conj{y_2}) &= 0 \\ \Im( \varphi_{12} \conj{
\varphi_{22}}) \Im( y_1 \conj{y_2}) &= 0 \\ & \ \ \vdots \\ \Im(\varphi_{1n}
\conj{ \varphi_{2n}}) \Im( y_1 \conj{y_2}) &= 0.  \end{align*} By assumption, $y
\not \sim \conj{y} ,$ and we must have $y_1 \conj{y_2} \neq \conj{y_1}y_2 =
\conj{ y_1 \conj{y_2}}$ and thus $\Im (y_1 \conj{y_2}) \neq 0.$ To satisfy the
above list of equations we must then have \[\Im( \varphi_{11} \conj{
\varphi_{21}}) = \cdots = \Im( \varphi_{1n} \conj{ \varphi_{2n}}) = 0.\]

For any  frame vector $\varphi_i,$ we have $\Im( \varphi_{1i}
\conj{\varphi_{2i}}) = 0,$ which implies $\varphi_i \in \vartheta\R^M.$ Thus,
$\Phi \subseteq \vartheta\R^M.$ Thus, we can say that any strictly conjugate
phase retrievable frame over $\C^2$ is a frame in $\vartheta\R^M.$

\medskip

To prove (1), suppose that $\Phi$ is conjugate phase retrievable and $\Phi
\not\subseteq \vartheta\R^M.$ By what we just proved, $\Phi$ is not strictly
conjugate phase retrievable, Thus,  we must have that $\Phi$ is complex phase
retrievable on $\C^2.$ In~\cite{sphase} it was proved that a minimum of four
vectors is required for complex phase retrieval on $\C^2$. This completes the
proof. Statement (2) has been proved in Theorem \ref{thm:th1}.  \end{proof}

\section{Discussions and Open Questions}\label{sec:end}

We end this paper with a discussion of some problems concerning  conjugate phase
retrieval that is also in line with the current research about phase retrieval.

\medskip

\subsection{Conjugate Phase Retrieval in High Dimension} For $M\ge 4$, the
following two questions are naturally raised: \begin{enumerate} \item  Compute
$N_*(M)$ and $N^*(M)$  for conjugate phase retrieval of real frames when $M \ge
4.$ \item  Determine if $N_{\ast}(M)< N^{\ast}(M)$ can happen for conjugate
phase retrieval.  \end{enumerate} In Theorem \ref{thm:main} we showed that
$N^*(M) \le 4M-6$ and $N_{*}(M) \le N^{\ast}(M)\le  4M - 6$ for $M  \ge 4.$ In
comparison with complex phase retrieval with the same notation for $N_*(M)$ and
$N^*(M)$, $N^{\ast}(M)\le 4M-4$ in any dimension $M$ for complex phase
retrieval, but it is also known that when $M=4$, there exists a frame of $11$
vectors that also does complex phase retrieval \cite{Vinzant}. In other words,
$N_{\ast}(4)\le 11 < 4(4)-4 = 12$.

\medskip

Notice that the proof for $4M-6$ generic vectors performing phase retrieval uses
the sufficient condition that $\ker ({\mathcal A})\cap{\mathcal S}^4_{\mathbb R}
= \{O\}$  in Theorem \ref{thm:S} (2).  However, a weaker condition that  $\ker
({\mathcal A})\cap\Re({\mathcal S}_{\C}^{1,1})= \{O\} $ is already enough.
Unfortunately, we do not know if $\Re(S_{\C}^{1,1})$ is a real projective
variety, nor its real dimension. Therefore, we cannot use Theorem
\ref{theorem:WX} to obtain a sharper result. Furthermore, we conjecture that
$\Re({\mathcal S}_{\C}^{1,1})$  should be strictly contained in ${\mathcal
S}^4_{\mathbb R}$ when $M\ge 4$. In view of this, we believe that $N_{\ast}(4)<
10 = 4(4)-6$ is highly possible to happen for conjugate phase retrieval with
real frames.

\medskip

\subsection{Strict Conjugate Phase Retrieval} Another interesting question
raised up is to know which vectors perform strict conjugate phase retrieval.  We
showed that strictly conjugate phase retrievable frames in $\C^2$ come entirely
from phased real vectors $\vartheta \R^M$. Is this true in higher dimensions? If
not, what other frames $\Phi \not\subset \vartheta \R^M$ are strictly conjugate
phase retrievable for $M > 2$?

\medskip

\subsection{Conjugate Unsigned Sampling.} Recent studies about phase retrieval
on real-valued bandlimited functions and also on shift-invariant spaces can be
found in \cite{Insp,AlGr,StableInf,CCSW,thakur}. However, as indicated in the
introduction, phase retrieval on complex-valued Paley-Wiener  space bandlimited
on  $[-b/2,b/2]$ ($PW_b$) is  impossible using real samples. With the notion of
conjugate phase retrieval,  we ultimately wish to recover complex-valued
functions in $PW_b$ from real samples. We propose the following definition for
recovery up to conjugacy in $PW_b$ and a natural question is raised:
\begin{definition}Let $\Lambda$ be a countable subset of $\R.$ We say $\Lambda$
is a \textbf{set of conjugate unsigned sampling} for $PW_b$  if for any $f,g \in
PW_b$ $|f(\lambda)| = |g(\lambda)|$ for all $\lambda \in \Lambda$ implies that
$f = e^{i \theta} g$ or $f = e^{i \theta} \conj{g}$ for some $0 \le \theta \le 2
\pi.$ \end{definition}

\medskip

\noindent{\bf (Qu)} Does there exist a set $\Lambda \subseteq \R$ that forms a
set of conjugate unsigned sampling on $PW_b$?

\medskip

In \cite{PYB,PYB1},  the authors proved the possibility of complex phase
retrieval on $PW$ and Bernstein spaces under a very specific measurement setup
with samples taken over the complex plane. In their work, a sampling density of
four times of the bandwidth is  also recorded. With the results studied in this
paper, conjugate unsigned sampling by real numbers may be
possible and its density should be at least four times of the bandwidth.

\medskip

\noindent{\bf Acknowledgment.} Chun-Kit Lai would like to thank professors Deguang Han
 and Jameson Cahill for some early enlightening discussions about phase retrievals.

\bibliography{conjugatePR}

 \bibliographystyle{abbrv}

\end{document}